\documentclass[10pt]{amsart}
\usepackage{amsmath,graphicx,amssymb,amsthm}
\usepackage{color}
 \addtolength{\textheight}{10mm} \addtolength{\textwidth}{25mm}
\addtolength{\oddsidemargin}{-20mm}
\addtolength{\evensidemargin}{-20mm} \addtolength{\topmargin}{-15mm}

\def\A{\mathcal{A}}
\def\B{\mathcal B}

\def\B{\mathcal B}

\def\amslatex{$\mathcal{A}\kern-.1667em\lower.5ex\hbox{$\mathcal{M}$}\kern-.125em\mathcal{S}$-\LaTeX}

\newtheorem{set}{set}[section]

\newtheorem{Definition}[set]{Definition}

\newtheorem{Lemma}[set]{Lemma}

\newtheorem{Remark}[set]{Remark}
\newtheorem{Theorem}[set]{Theorem}
\newcommand{\define}{\mathrel{\hbox{$\equiv$\hskip -.90em \lower .47ex \hbox{$\leftharpoondown$}}}}
\newcommand{\enifed}{\mathrel{\hbox{$\equiv$\hskip -.90em \lower .47ex \hbox{$\rightharpoondown$}}}}

\pagestyle{plain}
\begin{document}
\title{   Multidimensional Free Poisson Limits on Free Stochastic Integral Algebras}

\author{Mingchu Gao}
\address{School of Mathematics and Information Science,
Baoji University of Arts and Sciences,
Baoji, Shaanxi 720113,
China; and
Department of Mathematics,
Louisiana College, Pineville, LA 71359, USA} \email[Mingchu
Gao]{mingchu.gao@lacollege.edu}
\author{Junsheng Fang}
\address{School of Mathematics and Information Science,
Hebei Normal University,
Shijiazhuang, Hebei, 050024, China}
 \email[Junsheng Fang
]{junshengfang@hotmail.com}
\thanks{${}^*$ The second author was supported
 by the Project sponsored by the NSFC grant 11431011 and startup funding from Hebei Normal University}

\begin{abstract}
In this paper, we prove four-moment theorems for  multidimensional free Poisson limits on free Wigner chaos or the free Poisson algebra. We prove that, under mild technical conditions, a bi-indexed sequence of free stochastic integrals in free Wigner algebra or free Poisson algebra converges  to a free sequence of free Poisson random variables  if and only if  the  moments with order not greater than four of the sequence converge to the corresponding moments of the limit sequence of random variables.  Similar four-moment theorems hold when the limit sequence is not free, but has a multidimensional free Poisson distribution with parameters $\lambda>0$ and $\alpha=\{\alpha_i: 0\ne \alpha_i\in \mathbb{R}, i=1, 2, \cdots\}$.
\end{abstract}

\maketitle

{\bf Key Words} Free Wigner chaos,  Free Poisson chaos,  Fourth moment theorem, Multidimensional free Poisson distributions.

{\bf 2010 MSC} 46L54
\section*{Introduction}
 A basic question in free probability is to find distributions of random variables in a noncommutative probability space. No one can find  distributions of random variables in an arbitrary noncommutative probability space, but mathematicians want to investigate distributions of as many as possible random variables. The initial approach to this question is to study special  distributions inspired by the work in classical probability, such as free Gaussian distributions (i.e., semicircle distributions), free Poisson distributions, etc. But not too many distributions in classical probability could be defined and studied in free probability. Mathematicians extended their well-studied research territory by imposing operations such as free addition and free multiplication among the well known distributions. A more complicated and advanced operation is integration: integrating a deterministic (multi-variable) function with respect to a random measure of well known random variables such as semicircle random variables or free Poisson random variables. The resulting random variable is called the {\sl free stochastic (multiple) integral} of the function with respect to the random measure.
  The study on the distributions of free stochastic integrals is one of the main topics in free probability. Unfortunately, after integrating a function $f\in L^2(\mathbb{R}_+^q)$ ($q\ge 4$) with respect to a random measure of semicircular or free Poisson random variables, one can hardly get any non-trivial semicircle or free Poisson random variables (see Corollary 4.5 in \cite{BP}, Proposition 1.5 in \cite{NP}, Corollary 1.7 in \cite{KNPS}, and Corollary 1.6 in \cite{SB2}). A natural question, then, is to study the convergence of a sequence of free stochastic integrals. On the other hand, the limit theory (e.g., central and Poisson limit theorems) is a key topic in both classical and free probability. Mathematicians, thus,  began to study the convergence problem among (free) stochastic integrals more than a decade ago.

Let $\{W_t: t\ge 0\}$ be a standard Brownian motion on $\mathbb{R}_+$, and let $n\in \mathbb{N}$. Denote by $I^W(f)$ the multiple stochastic Wiener-It$\hat{o}$ integral of order $n$ of a function $f\in L^2(\mathbb{R}_+^n)$. Denoted by $L^2_s(\mathbb{R}_+^n)$ the subset of $L^2(\mathbb{R}_+^n)$ composed of symmetric functions. The collection of random variables $\{I^W(f): f\in L^2_s(\mathbb{R}_+^n)\}$ is called the $n$th Wiener chaos associated with $W$. In their seminal paper \cite{NuP}, Nualart and Peccati proved that, given  a sequence of elements with variance one living in a Wiener chaos of  a fixed order,  the convergence of this sequence to the standard normal distribution  is equivalent to the convergence of the fourth moment of the sequence to three. This result is now known as the {\sl fourth moment theorem}. The above result has led to a wide collection of new results and inspired several new research directions (see \cite{NP1} and the constantly updated web-page: http://sites.google.com/site/malliavinstein/home).     The fourth moment theorem was proved to hold as well for sequences of vectors of multiple integrals possibly of different orders in \cite{PT}.

Non-commutative counterparts of the results in classical probability have been established in the context of the chaos associated with a free Brownian motion and a free Poisson random measure. The mentioned concepts and notations in this section will be defined in Section 1.

{\bf Convergence of multiple integrals with respect to a free Brownian motion}. Let $I^S(f)$ be the multiple free stochastic integral of a function $f\in L^2(\mathbb{R}_+^q)$ ($q\ge 2$) with respect to a free Brownian motion $S$.  Kemp, Nourdin, Peccati, and Speicher \cite{KNPS} proved a beautiful fourth moment theorem for the convergence of a sequence of multiple Wigner integrals in law to the standard semicircular distribution (Theorem 1.3 in \cite{KNPS}). A similar result was obtained in \cite{NP} for such a sequence to converge to a centered free Poisson distribution (Theorem 1.4 in \cite{NP}). Nourdin, Peccati, and Speicher \cite{NPS} proved a fourth moment theorem for a sequence of multidimensional  free stochastic integral vectors to converge to a multidimensional semicircular limit theorem on the free Wigner chaos (Theorem 1.3 in \cite{NPS}).

{\bf Convergence of multiple integrals with respect to a free Poisson random measure}. Let $\widehat{N}$ be a centered free  Poisson random measure  on $\mathbb{R}_+$, and $I^{\widehat{N}}(f)$ be the multiple integral of $f\in L^2(\mathbb{R}_+^q)$ ($q\ge 2$). Bourguin and Peccati \cite{BP} proved a fourth moment theorem for convergence to a semicircular distribution on the free Poisson algebra (Theorem 4.3 in \cite{BP}).
In \cite{SB2}, S. Bourguin gave a fourth moment theorem for convergence to a free centered Poisson distribution  on the free Poisson algebra (Theorem 1.5 in \cite{SB2}).
He later proved a fourth moment theorem for convergence to vector-valued semicircular limits on the free Poisson algebra (Theorem 1.3 in \cite{SB1}).
Recently, Bourguin and Nourdin provided  an equivalent condition for the convergence discussed in \cite{SB1} in terms of the fourth moment of the Euclidean norm of the involved random vectors when the kernels of the multiple integrals are fully symmetric (Theorem 6.3 in \cite{BN}).

{\bf The work in the present paper}. Given a bi-indexed sequence of multiple free stochastic integrals with respect to free Wigner chaos or a free Poisson random measure, the aim of this paper is to investigate the convergence of the sequence to a multidimensional free Poisson distribution defined in \cite{AG}.  The rest of this paper is organized as follows. In Section 1, we recall relevant elements in free probability, free Wigner chaos, and free Poisson algebra used in sequel.   In Section 2, we study the convergence problem for a bi-indexed sequence of free stochastic integrals of  symmetric functions in $L^2(\mathbb{R}_+^q)$ ($q$ is even) with respect to free Bromnian motion. We prove that such a sequence  converges in distribution to a free sequence of free Poisson random variables if and only if its joint moments with order less than or equal to four converge to the corresponding moments of the limit sequence (Theorem 2.7). In section 3, a similar theorem is proved  when the free stochastic integrals are defined with respect to a centered free Poisson random measure and the kernels are symmetric, bounded, with bounded supports   (Theorem 3.3). Finally, in Section 4, we study the convergence problem of sequences studied in Sections 2 and 3, when the limit sequence has a multidimensional free Poisson distribution with parameters $\lambda>0$ and $\alpha=\{\alpha_1, \alpha_2, \cdots\}$, a sequence of non-zero real numbers. Using the techniques in Sections 2, 3, \cite{NP}, and \cite{SB2}, we get  four-moment theorems in this case (Theorems 4.2 and 4.4).

\section{Preliminaries}

In this section, we recall the  relevant elements in free probability (especially, free stochastic integration) used in sequel. For details on the subject, the reader is referred to \cite{SB2}, \cite{BP}, \cite{NS}, \cite{NP}, and \cite{VDN}. Basics on operator algebras can be found in \cite{KR}.



{\bf Symmetric functions and contractions.} For an integer $q\ge 2$, let $L^2(\mathbb{R}_+^q)$ be the space of all complex-valued functions on $\mathbb{R}_+^q=\{(x_1, \cdots, x_q): x_i\ge 0, i=1, 2, \cdots, q\}$ that are square-integrable with respect to $\mu$, the Lebesgus measure on $\mathbb{R}^q$. Given $f\in L^2(\mathbb{R}_+^q)$, we write $$f^*(t_1, t_2, \cdots, t_q)=\overline{f(t_q, \cdots, t_2, t_1)}.$$ We say that $f\in L^2(\mathbb{R}_+^q)$ is {\sl symmetric} if $f(t_1, \cdots, t_q)=f^*(t_1, \cdots, t_q)$, for all $(t_1, \cdots, t_q)\in \mathbb{R}_+^q$.


Let $m,n$ be natural numbers, $f\in L^2(\mathbb{R}_+^m), g\in L^2(\mathbb{R}_+^n)$, and $p\le \min\{m, n\}$ be a nonnegative integer. The {\sl $p$-th arc contraction} $f\stackrel{p}\smallfrown g$ is defined by
\begin{align*}
&f\stackrel{p}\smallfrown g(t_1, \cdots, t_{m+n-2p})\\
=&\int_{\mathbb{R}_+^p}f(t_1, \cdots, t_{m-p}, s_p, \cdots, s_1)g(s_1, \cdots, s_p, t_{m-p+1}, \cdots, t_{m+n-2p})ds_1ds_2\cdots ds_p.
\end{align*}
When $p=0$, we define $f\stackrel{0}\smallfrown g=f\otimes g$, the $m+n$-variable function defined by $$f\otimes g(t_1, \cdots, t_{m+n})=f(t_1, \cdots, t_m)g(t_{m+1}, \cdots, t_{m+n}).$$ When $p=m=n$, $f \stackrel{p}\smallfrown g=\langle g,f^*\rangle$.

The {\sl star contraction} of index $(p, p-1)$ of $f$ and $g$ is defined by
\begin{align*}
&f\star_{p}^{p-1} g(t_1, \cdots, t_{m+n-2p+1})\\
=&\int_{\mathbb{R}_+^{p-1}}f(t_1, \cdots, t_{m-p}, t_{m-p+1}, s_{p-1}, \cdots, s_1)\\
\times& g(s_1, \cdots, s_{p-1}, t_{m-p+1}, \cdots, t_{m+n-2p+1})ds_1ds_2\cdots ds_{p-1}.
\end{align*}
One can verify the relations
$$(f\star_p^{p-1}g)^*=g^*\star_p^{p-1}f^*, (f\stackrel{p}\smallfrown g)^*=g^*\stackrel{p}\smallfrown f^*.\eqno (1.0)$$

{\bf Free Probability,  Free Brownian motion,  and free Wigner Chaos.} The main references for the contents of this subsection are \cite{BS} and \cite{NP}. Let $(\A, \varphi)$ be a tracial $W^*$-probability space, that is, $\A$ is a finite von Neumann algebra and $\varphi:\A\rightarrow \mathbb{C}$ is a faithful, normal, and tracial state on $\A$. Self-adjoint operators in $\A$ are referred to {\sl random variables}. The {\sl law} (or {\sl distribution}) of a random variable $X\in \A$ is the unique Borel measure on $\mathbb{R}$ having the same moments as $X$ (see Proposition 3.13 in \cite{NS}). For $1\le p\le \infty$, one writes $L^p(\A, \varphi)$ to indicate the $L^p$ space obtained by completing $\A$ with respect to the norm $\|X\|_p=\varphi(|X|^p)^{1/p}$, where $|X|=(X^*X)^{1/2}$,  for $p<\infty$, and $\|\cdot\|_\infty$ stands for the operator norm.

Let $\A_1, \cdots, \A_n$ be unital subalgebras of $\A$. Let $X_1, \cdots, X_m$ be elements chosen among the $\A_i$'s such that, for $1\le j<m$, $X_j$ and $X_{j+1}$ do not come from the same $\A_i$, and such that $\varphi(X_j)=0$ for each $j$. The subalgebras $\A_1, \cdots, \A_n$ are said to be {\sl free} or {\sl freely independent} if, in this circumstance, $\varphi(X_1\cdots X_m)=0$. Random variables are {\sl free} if the unital algebras they generate are free.

 A {\sl partition} of $[n]=\{1, 2, \cdots, n\}$ is a collection of mutually disjoint nonempty subsets $B_1, B_2, \cdots$, $B_r$ of $[n]$ such that $B_1\cup B_2\cup \cdots \cup B_r=[n]$. The subsets are called the {\sl blocks} of the partition. The blocks are ordered by their least elements, i.e. $\min B_i<\min B_j$ iff $i<j$. The cardinality of $B_i$ is denoted by $|B_i|$. The set of all partitions of $[n]$ is denoted by $\mathcal{P}(n)$. 
A partition $\pi\in \mathcal{P}(n)$ has a {\sl crossing} if there are two distinct blocks $B_i$ and $B_j$ in $\pi$ with elements $x_i, y_i\in B$ and $x_j, y_j\in B_j$ such that $x_i<x_j<y_i<y_j$. If a partition $\pi$ has no crossings, we say that $\pi$ is a {\sl non-crossing partition}. The set of all  non-crossing partitions of $[n]$ is denoted by $NC(n)$. It is well known (see Page 144 in \cite{NS}) that the reversed refined order induces a lattice structure on $NC(n)$, where the partial order is defined by $\pi\le \sigma$, for $\pi, \sigma\in NC(n)$ if every block in $\sigma$ is the union of some blocks in $\pi$.

A set $\{a_i:i\in I\}$ of random variables is a free family of random variables  (or, we say, $a_i, i\in I$, are free) if and only if  for $1<n\in \mathbb{N}$ and $\chi:\{1, 2, \cdots, n\}\rightarrow I$, we have $\kappa_n(a_{\chi(1)}, a_{\chi(2)}, \cdots, a_{\chi(n)})=0$ whenever $\chi$ is not  constant (Theorem 11.20 in \cite{NS}).

The (centered) {\sl semicircular distribution} (or Wigner law) $S(0,t)$ is the probability distribution
$$S(0,t)(dx)=\frac{1}{2\pi t}\sqrt{4t-x^2}dx, |x|\le 2\sqrt{t}.$$ The odd moments of this distribution are $0$ and even moments are given by $$\int_{-2\sqrt{t}}^{2\sqrt{t}}x^{2m}S(0,t)(dx)=C_mt^m,$$ where $C_m=\frac{1}{m+1}\left(\begin{matrix}2m\\m\end{matrix}\right)$ is the $m$-th Catalan number (see Lecture 2 in \cite{NS}).

Let $d\ge 2$ be a natural number, and let  $c=(c(i,j))_{d\times d}$ be a positive definite symmetric matrix. A $d$-dimensional vector $(s_1, \cdots, s_d)$ of random variables in $(\A, \varphi)$ is said to be  {\sl a semicircular family with covariance $c$} if for every $n\ge 2$ and $i_1, \cdots, i_n\in \{1, \cdots, d\}$ $$\varphi(s_{i_1}s_{i_2}\cdots s_{i_d})=\sum_{\pi\in NC_2(n)}\prod_{(a,b)\in \pi}c(i_a, i_b),$$ where $NC_2(n)=\{\pi\in NC(n): |V|=2, \forall V\in \pi\}$ (see Definition 8.15 in \cite{NS}).

A {\sl free Brownian motion} $S$ consists of $(i)$ a filtration $\{\A_t:t\ge 0\}$, that is, a family of von Neumann subalgebras of $\A$ such that $\A_s\subset\A_t$ whenever $0\le s< t$, $(ii)$ a collection $S=\{S_t:t\ge 0\}$ of random variables in $\A$ such that: $(a)$ $S_0=0$ and $S_t\in \A_t$, for $t>0$, $(b)$ for every $t$, $S_t$ has the distribution $S(0, t)$, and $(c)$ for every $0<s<t$, the operator $S_t-S_s$ is free from $\A_s$, and has the distribution $S(0, t-s)$.

For a natural number $n$, and $f\in L^2(\mathbb{R}_+^n)$, the {\sl (free stochastic) multiple integral} $I^S(f)$ is defined as follows. $(a)$ Define $I^S(f)=(S_{b_1}-S_{a_1})\cdots (S_{b_n}-S_{a_n})$ for a function $$f(t_1, \cdots, t_n)=\mathbf{1}_{(a_1, b_1)}(t_1)\times \cdots \times \mathbf{1}_{(a_n, b_n)}(t_n),\eqno (1.1)$$ where intervals $(a_i, b_i), i=1, \cdots, n$, are pairwise disjoint; $(b)$ extend the above $I^S(f)$ linearly to the integrals of simple functions vanishing on diagonals, that is, linear combinations of functions of type $(1.1)$; $(c)$ exploit the isometric relation ((3.3) in \cite{NP}) $$\langle I^S(f), I^S(g)\rangle_{L^2(\A, \varphi)}=\langle f, g\rangle_{L^2(\mathbb{R}_+^n)},$$ where $f$ and $g$ are simple functions vanishing on diagonals, and use a density argument to define $I^S(f)\in L^2(\A, \varphi)$ for $f\in L^2(\mathbb{R}_+^n)$. We define $I^S(\alpha)=\alpha$, for a complex number $\alpha$. The set of all random variables $I^S(f), f\in L^2(\mathbb{R}_+^n)$, is called the $n$th {\sl free Wigner chaos} associated with $S$. For $m, n\in \mathbb{N}$, and $f\in L^2(\mathbb{R}_+^m), g\in L^2(\mathbb{R}_+^n)$, we have the following orthogonal and isometric formula (Proposition 3.5 in \cite{BP})
$$\varphi(I^S(f)I^S(g))=\delta_{m,n}\langle f, g^*\rangle, \eqno (1.2)$$ where $\delta_{m,n}=1$, if $m=n$; $\delta_{m,n}=0$, if $m\ne n$.

 {\sl Free Wigner chaos} is defined as the linear space of all multiple integrals $I^S(f)$, for $f\in L^2(\mathbb{R}_+^n)$, $n\ge 0$, where $L^2(\mathbb{R}_+^0)=\mathbb{C}$. Free Wigner chaos becomes a unital $*$-algebra by imposing the following operations
 $$I^S(f)I^S(g)=\sum_{k=0}^{\min\{m, n\}}I^S(f\stackrel{k}\smallfrown g),  I^S(f)^*=I^S(f^*), \forall f\in L^2(\mathbb{R}_+^m), g\in L^2(\mathbb{R}_+^n)\eqno (1.3)$$ (see Remark 3.4 in \cite{BP} and (3.4) in \cite{NP}). The multiplication unit of the free Wigner Chaos
 is $1\in L^2(\mathbb{R}^0)=\mathbb{C}$, since, $I^S(1)$ is defined as $1$.

{\bf The free Poisson algebra}. The main reference for this section is \cite{BP}.
A {\sl free Poisson distribution} $N(\lambda, \alpha)$ with parameters $\lambda>0$ and $\alpha\in \mathbb{R}$ with $\alpha\ne 0$ is a probability distribution on $\mathbb{R}$ defined as follows.  A random variable $X\in \A$ has a  distribution $N(\lambda, \alpha)$ if and only if $$\kappa_m(X)=\lambda\alpha^m, \forall m\ge 1,$$ where $\kappa_m$ is the $m$th free cumulant of $(\A, \varphi)$ (see Proposition 12.11 in \cite{NS}).
When $\alpha=1$, we denote $N(\lambda, \alpha)$ by $N(\lambda)$. If $X(\lambda)$ has a distribution $N(\lambda)$, we denote the distribution of $Y(\lambda):=X(\lambda)-\lambda\mathrm{1}$ by $\widehat{N}(\lambda)$, which is called the {\sl centered free Poisson distribution} of parameter $\lambda$, which is characterized by $$\kappa_1(Y(\lambda))=0, \kappa_m(Y(\lambda))=\lambda, m\ge 2.$$

Let $(Z,\mathcal{Z})$ be a Polish space with $\mathcal{Z}$ the associated Borel $\sigma$-field, and let $\mu$ be a positive $\sigma$-finite measure over $(Z, \mathcal{Z})$ without atoms. We denote by $\mathcal{Z}_\mu$ the class of those $A\in \mathcal{Z}$ such that $\mu(A)<\infty$. Let $(\A, \varphi)$ be a tracial $W^*$-probability space and $\A_+$ be the cone of positive operators in $\A$. Then, a {\sl free Poisson random measure} with control $\mu$ on $(Z, \mathcal{Z})$ and values in $(\A,\varphi)$ is a mapping $N:\mathcal{Z}_\mu\rightarrow \A_+$ with the following properties.
\begin{enumerate}
\item For $A\in \mathcal{Z}_\mu$, $N(A)$ is a free Poisson random variable with parameter $\mu(A)$.
\item If $2\le r\in \mathbb{N}$,  and $A_1, \cdots, A_r\in \mathcal{Z}_\mu$ are disjoint, then $N(A_1), \cdots,  N(A_r)$ are free, and $N(\cup_{i=1}^rA_i)=\sum_{i=1}^rN(A_i)$.
\end{enumerate}
 The existence of free Poisson measures on an appropriate $W^*$-probability space is guaranteed by Theorem 3.3 in \cite{BnT} and Theorem 5.1 in \cite{BV}. If $N$ is a free Poisson random measure with control $\mu$, we will denote by $\widehat{N}$ the mapping from $\mathcal{Z}_\mu$ into $\A$ defined by $\widehat{N}(A)=N(A)-\mu(A)\mathrm{1}, A\in \mathcal{Z}_\mu$, where $\mathrm{1}$ is the unit of $\A$.  Then   $\widehat{N}(A)$ has the centered free Poisson distribution $\widehat{N}(\mu(A))$. We call $\widehat{N}$ a {\sl free centered Poisson measure}. In this paper, we let $(Z, \mathcal{Z}, \mu)=(\mathbb{R}_+, \mathcal{B}(\mathbb{R}_+), \mu)$, where $\mu$ is the Lebesgue measure on $\mathbb{R}_+$. Then $\mathcal{Z}_\mu=\mathcal{B}_\mu(\mathbb{R}_+)$ is the set of all Borel subsets $B$ of $\mathbb{R}_+$ with finite Lebesgue measure ($\mu(B)<\infty$).

{\sl Centered free Charlier polynomials} are defined by the following recurrence relations
$$C_{0}(x,\lambda)=1, C_1(x,\lambda)=x, xC_m(x,\lambda)=C_{m+1}(x,\lambda)+C_m(x, \lambda)+\lambda C_{m-1}(x, \lambda), m\ge 1.$$

Let $q\ge 2$ be an integer, and let $\mathcal{E}_q$ be the linear subspace of $L^2(\mathbb{R}_+^q)$ generated by functions of the type $$f=\mathrm{1}_{A_1}^{\otimes k_1}\otimes \mathrm{1}_{A_2}^{\otimes k_2}\otimes\cdots\otimes \mathrm{1}_{A_l}^{\otimes k_l},\eqno (1.4)$$ where $k_1+\cdots+k_l=q$, each $A_j$ is bounded, and $A_j\cap A_{j+1}=\emptyset$, for $j=1, 2, \cdots, l-1$. For a function of the type (1.4), the {\sl multiple free stochastic integral} of $f$ with respect to the centered free Poisson random measure $\widehat{N}$ is defined by
$$I^{\widehat{N}}(f)=C_{k_1}(\widehat{N}(A_1), \mu(A_1))\cdots C_{k_l}(\widehat{N}(A_l), \mu(A_l)).$$ We extend $I^{\widehat{N}}(f)$ to general $f\in \mathcal{E}_q$ by linearity.

For $q, q'\ge 0$, and $f\in \mathcal{E}_q$ and $g\in \mathcal{E}_{q'}$, one has the following orthogonal and isometric property $$\varphi(I^{\widehat{N}}(g)^*I^{\widehat{N}}(f))=\langle f, g\rangle_{L^2(\mathbb{R}_+^q)}\delta_{q,q'}\eqno (1.5)$$ (see Proposition 3.5 in \cite{BP}). Then, for a $f\in L^2(\mathbb{R}_+^q)$, one can define $I^{\widehat{N}}(f)\in L^2(\A, \varphi)$  by the density of $\mathcal{E}_q$ in $L^2(\mathbb{R}_+^q)$ and the above isometry property. The space $L^2(\mathcal{X}(\widehat{N}), \varphi)=\{I^{\widehat{N}}(f): f\in L^2(\mathbb{R}_+^q), q\ge 0\}$ is a unital $*$-algebra with the following operations.
 For $f\in L^2(\mathbb{R}_+^p), g\in L^2(\mathbb{R}_+^q)$, $p, q\ge 1$,
 $$I^{\widehat{N}}(f)I^{\widehat{N}}(g)=\sum_{k=0}^{\min\{p, q\}}I^{\widehat{N}}(f\stackrel{k}\smallfrown g)+\sum_{k=1}^{\min\{p, q\}}I^{\widehat{N}}(f\star_{k}^{k-1} g), I^{\widehat{N}}(f)^*=I^{\widehat{N}}(f^*), \eqno (1.6)$$ (see (3) of Page 2145 in \cite{SB2}) The algebra $L^2(\mathcal{X}(\widehat{N}, \varphi))$ is called the {\sl free Poisson (multiple integral) algebra}.

\section{Multidimensional free Poisson limits on the free Wigner chaos}

 In this section, we  study the convergence of a sequence in the free Wigner chaos to a multidimensional free Poisson distribution, aiming at giving a fourth moment type theorem for the convergence.

 Multidimensional (finite dimensional) free Poisson distributions were studied in \cite{RS}.   More general (infinite dimensional) multidimensional free Poisson distributions  were defined in \cite{AG} through a multidimensional free Poisson limit theorem.
\begin{Definition}[Definition 2.7 in \cite{AG}] Let $\{\alpha_i:i=1,2,\cdots\}$ be a sequence of real numbers, $\{\lambda_i>0:i=1, 2, \cdots\}$ with $\lambda=\sup\{\lambda_i:i\ge 1\}<\infty$,  and for each $N\in \mathbb{N}, N\ge\lambda$, $(\A, \varphi_N)$ be a $C^*$-probability space. A sequence $\{b_i:i=1, 2, \cdots\}$ of random variables in a non-commutative probability space $(\B, \phi)$ has a multidimensional free Poisson distribution, if there is a family $\{p_N^{(i)}:  i\in \mathbb{N}\}$ of projections in $\A_N$ and an ultrafilter $\omega \in \beta\mathbb{N}\setminus \mathbb{N}$ such that $\varphi_N(p_N^{(i)})=\frac{\lambda_i}{N}$,  and $$\kappa_n(b_{i(1)}, b_{i(2)}, \cdots, b_{i(n)})=\alpha_{i(1)}\alpha_{i(2)}\cdots \alpha_{i(n)}\lim_{N\rightarrow \omega}N\varphi_N(p_N^{(i(1))}p_N^{(i(2))}\cdots p_N^{(i(n))}),$$
 for all  $(i(1), i(2), \cdots, i(n))\in \mathbb{N}^n.$
\end{Definition}

If we choose  $\{p_N^{(i)}: i=1, 2, \cdots\}$ to be an orthogonal sequence of projections for each $N$, then we get $\kappa_n(b_{\epsilon(1)}, \cdots, b_{\epsilon(n)})=0,$ if $\epsilon:\{1, 2, \cdots n\}\rightarrow \mathbb{N}$ is not constant, and $n\ge 2$. It follows that the sequence $\{b_n:n=1, 2, \cdots\}$ is a free family of free Poisson random variables. In other words, a free sequence $\{b_n: n=1, 2, \cdots \}$ of free Poisson random variables has a multidimensional free Poisson distribution in sense of Definition 2.1.
Let $a_i=b_i-\varphi(b_i)1, \forall i\in \mathbb{N}$, for a sequence $\{b_n:n=1, 2,\cdots \}$ having a multidimensional free Poisson distribution. We then have
$$\kappa_1(a_i)=0, i=1, 2, \cdots;$$ $$\kappa_n(a_{i(1)}, a_{i(2)}, \cdots, a_{i(n)})=\kappa_n(b_{i(1)}, \cdots, b_{i(n)}),\forall i(1), i(2), \cdots, i(n), n\in \mathbb{N}, n\ge 2.$$
We say that $\{a_i:i=1, 2, \cdots\}$ has a centered multidimensional free Poisson distribution.
\begin{Theorem} Let $\{a_n:n=1, 2, \cdots\}$ be a free sequence of    free Poisson random variables with parameters $\lambda_i, i=1, 2, \cdots$,  in a non-commutative probability space $(\A, \varphi)$. Then for $2\le n\in \mathbb{N}$ and $\chi:\{1, 2, \cdots, n\}\rightarrow \mathbb{N}$, we have $$\varphi(a_{\chi(1)}\cdots a_{\chi(n)})=\sum_{\pi\in NC(n), \pi\le \chi}\prod_{V\in \pi}\lambda_{\chi(V)}, $$ where $\chi$ in the sum subscript of the right hand side is the partition of $\{1, 2, \cdots, n\}$ defined by $l_1\sim_\chi l_2$ if and only if $\chi(l_1)=\chi(l_2)$, for $1\le l_1, l_2 \le n$, $\chi(V)$, the subscript of $\lambda$,  is the common value of $\chi$ when restricted to $V\in \pi$.
\end{Theorem}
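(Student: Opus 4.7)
The plan is to combine the free moment--cumulant formula with the vanishing of mixed free cumulants under freeness and the explicit free cumulants of a free Poisson law. First I would write
\[
\varphi(a_{\chi(1)}\cdots a_{\chi(n)})=\sum_{\pi\in NC(n)}\prod_{V\in\pi}\kappa_{|V|}\bigl(a_{\chi(i)}:i\in V\bigr),
\]
the standard moment--cumulant formula in a tracial $W^*$-probability space (see \cite{NS}), where the factor on a block $V=\{i_1<i_2<\cdots<i_k\}$ stands for $\kappa_k(a_{\chi(i_1)},\ldots,a_{\chi(i_k)})$.

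Next I would invoke the freeness characterization already recalled in Section 1 (Theorem 11.20 in \cite{NS}): since the family $\{a_n:n\in\mathbb{N}\}$ is free, every mixed free cumulant vanishes. Hence a block $V\in\pi$ contributes nothing unless $\chi$ takes a single value on $V$. Equivalently, only those $\pi\in NC(n)$ whose blocks are each contained in a single block of the kernel partition of $\chi$ survive; in the lattice order of $NC(n)$ described in Section 1, this is precisely the condition $\pi\le\chi$.

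For such a $\pi$ and each block $V\in\pi$ on which $\chi$ takes the common value $\chi(V)$, the surviving cumulant reduces to the $|V|$-th free cumulant of the single free Poisson variable $a_{\chi(V)}\sim N(\lambda_{\chi(V)})$, which by the definition of the free Poisson distribution recalled in Section 1 (case $\alpha=1$) equals $\lambda_{\chi(V)}$. Multiplying these block contributions over $V\in\pi$ and summing over $\pi\le\chi$ produces the claimed identity. No essential obstacle is anticipated: the result is a direct synthesis of the moment--cumulant formula, the cumulant criterion for freeness, and the cumulant characterization of the free Poisson law, all of which are already in place before Theorem~2.2.
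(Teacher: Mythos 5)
Your proposal is correct and follows essentially the same route as the paper's proof: the moment--cumulant formula, the vanishing of mixed free cumulants for the free family (which restricts the sum to $\pi\le\chi$), and the fact that every free cumulant of $a_{\chi(V)}$ equals $\lambda_{\chi(V)}$. You merely make the vanishing-of-mixed-cumulants step explicit, which the paper leaves implicit when it writes the sum over $\pi\le\chi$ directly.
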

\begin{proof} The proof is a simple application of the moments and free cumulants conversion formula ($(11.8)$ in \cite{NS}). Under the assumptions of this theorem, we have
\begin{align*}
\varphi(a_{\chi(1)} \cdots a_{\chi(n)})=&\sum_{\pi\in NC(n), \pi\le \chi}\kappa_\pi(a_{\chi(1)}, \cdots, a_{\chi(n)})\\
=&\sum_{\pi\in NC(n), \pi\le \chi}\prod_{V\in\pi}\kappa_V(a_{\chi(1)} \cdots a_{\chi(n)})\\
=&\sum_{\pi\in NC(n), \pi\le \chi}\prod_{V\in \pi}\lambda_{\chi(V)}
\end{align*}
\end{proof}

As the discussions in  the proof of Theorem 1.4 in \cite{NP}, iterative applications of the product formula $(1.3)$ yields the following formula $$I(f_n^{(i_1)}\cdots I(f_n^{(i_m)})=\sum_{(r_1, r_2, \cdots, r_{m-1})\in A_m}I((\cdots (f_n^{(i_1)})\stackrel{r_1}\smallfrown f_n^{(i_2)})\stackrel{r_2}\smallfrown f_n^{(i_3)}\cdots\stackrel{r_{m-2}}\smallfrown f_n^{(i_{m-1})})\stackrel{r_{m-1}}\smallfrown f_n^{(i_{m})}),$$
for all $i_1, \cdots, i_m, m\in \mathbb{N}, m\ge 2$, where
\begin{align*}A_m=&\{(r_1, r_2, \cdots, r_{m-1})\in \{0, 1, 2, \cdots, q\}^{m-1}:r_2\le 2q-2r_1,\\
 &r_3\le 3q-2r_1-2r_2, \cdots, r_{m-1}\le (m-1)q-2r_1-\cdots-2r_{r_{m-2}}\}.
 \end{align*}
 Then, we have $$\varphi(I(f_n^{(i_1)})\cdots I(f_n^{(i_m)}))=\sum_{r_1, r_2, \cdots, r_{m-1}\in B_m}(\cdots (f_n^{(i_1)}\stackrel{r_1}\smallfrown f_n^{(i_2)})\stackrel{r_2}\smallfrown f_n^{(i_3)}\cdots\stackrel{r_{m-2}}\smallfrown f_n^{(i_{m-1})})\stackrel{r_{m-1}}\smallfrown f_n^{(i_{m})},$$
 where $B_m=\{(r_1, \cdots, r_{m-1})\in A_m:2r_1+2r_2+\cdots+2r_{m-1}=mq\}$.

We need several technical results to prove the main result of this section.

\begin{Lemma}
Given an even number  $q\ge 2$ and $\lambda >0$, consider a bi-indexed sequence $\{f_{n}^{(i)}: n, i\in \mathbb{N}\}$ of  symmetric functions in $L^2(\mathbb{R}_+^q)$ such that $\lim_{n\rightarrow \infty}\langle f_{n} ^{(i)}, f_{n}^{(j)}\rangle_{L^2(\mathbb{R}_+^q)}=\delta_{i,j}\lambda_i$, for all $i, j \in \mathbb{N}$. As $n\rightarrow \infty$, one has that, for $i, j\in \mathbb{N}$,  $$\varphi(I(f_n^{(i)})I(f_n^{(j)})^2I(f_n^{(i)})-2 (I(f_n^{(i)})I(f_n^{(j)})^2)))\rightarrow \left\{\begin{array}{ll}\lambda_i\lambda_j, &\text{if  } i\ne j;\\
 2\lambda_i^2-\lambda_i, &\text{if  } i=j
 \end{array}\right.\eqno (2.1)$$ if and only if $\|f_{n}^{(i)}\stackrel{q/2}\smallfrown f_{n}^{(i)}-f_{n}^{(i)}\|_{L^2(\mathbb{R}_+^q)}\rightarrow 0$ and $\|f_{n}^{(i)}\stackrel{r}\smallfrown f_{n}^{(i)}\|_{L^2(\mathbb{R}_+^q)}\rightarrow 0$, for all $r=1, 2, \cdots, q-1$, $r\ne q/2$, and $i=1, 2, \cdots$; and $\|f_{n}^{(i)}\stackrel{r}\smallfrown f_{n}^{(j)}\|_{L^2(\mathbb{R}_+^q)}\rightarrow 0$, for all $r=1, 2, \cdots, q-1$,  if $i\ne j$, $i, j=1, 2, \cdots$.
\end{Lemma}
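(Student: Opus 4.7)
The plan is to convert the moment quantity in $(2.1)$ into an $L^2(\A,\varphi)$-norm of an explicit operator, and then decompose that norm into a sum of squared $L^2(\mathbb{R}_+^q)$-norms of contractions by invoking the orthogonality of distinct-order Wigner integrals. The starting point is the algebraic identity
$$\varphi\bigl(I(f_n^{(i)}) I(f_n^{(j)})^2 I(f_n^{(i)}) - 2\,I(f_n^{(i)}) I(f_n^{(j)})^2\bigr) = \|I(f_n^{(j)}) I(f_n^{(i)}) - I(f_n^{(j)})\|_2^2 - \|f_n^{(j)}\|^2,$$
obtained by expanding $\varphi(K_n^*K_n)$ with $K_n:=I(f_n^{(j)}) I(f_n^{(i)}) - I(f_n^{(j)})$, using the trace property of $\varphi$, the self-adjointness $I(f)^*=I(f)$ for symmetric $f$, and the isometry $\varphi(I(f)^2)=\|f\|^2$ from $(1.2)$.

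I then expand $K_n$ into its Wigner chaos components via the product formula $(1.3)$. This writes $K_n$ as a sum of Wigner integrals of distinct orders (respectively $0$, $q$, $2q$, and $2q-2k$ for $k\in\{1,\dots,q-1\}\setminus\{q/2\}$) with kernels $\langle f_n^{(i)},f_n^{(j)}\rangle$, $f_n^{(j)}\stackrel{q/2}\smallfrown f_n^{(i)}-f_n^{(j)}$, $f_n^{(j)}\otimes f_n^{(i)}$, and $f_n^{(j)}\stackrel{k}\smallfrown f_n^{(i)}$. The orthogonality relation $(1.2)$ then gives the Pythagorean identity
\begin{align*}
\|K_n\|_2^2 ={}& |\langle f_n^{(i)},f_n^{(j)}\rangle|^2 + \|f_n^{(j)}\stackrel{q/2}\smallfrown f_n^{(i)}-f_n^{(j)}\|^2 \\
&+ \|f_n^{(j)}\|^2\|f_n^{(i)}\|^2 + \sum_{k=1,\, k\ne q/2}^{q-1}\|f_n^{(j)}\stackrel{k}\smallfrown f_n^{(i)}\|^2,
\end{align*}
a sum of non-negative terms.

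When $i=j$ (write $f$ for $f_n^{(i)}$), substitution and $\langle f,f\rangle=\|f\|^2$ reduce the LHS of $(2.1)$ to $2\|f\|^4-\|f\|^2+\|f\stackrel{q/2}\smallfrown f-f\|^2+\sum_{k\ne q/2}\|f\stackrel{k}\smallfrown f\|^2$, so with $\|f\|^2\to\lambda_i$ its convergence to $2\lambda_i^2-\lambda_i$ is equivalent to the non-negative remainder tending to $0$, forcing each summand to vanish. This handles the $i=j$ half of the equivalence. When $i\ne j$, the identity only controls a signed combination in which the cross term $\|f_n^{(j)}\stackrel{q/2}\smallfrown f_n^{(i)}-f_n^{(j)}\|^2$ is not directly pinned down. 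To get around this I compute $\varphi(I(f_n^{(i)})I(f_n^{(j)})^2)$ separately: by $(1.3)$ and $(1.2)$ it equals $\langle f_n^{(i)}, f_n^{(j)}\stackrel{q/2}\smallfrown f_n^{(j)}\rangle$. Using the diagonal condition already established from the $(j,j)$ case, namely $\|f_n^{(j)}\stackrel{q/2}\smallfrown f_n^{(j)}-f_n^{(j)}\|\to 0$, together with $\langle f_n^{(i)},f_n^{(j)}\rangle\to 0$, this quantity tends to $0$. Hence the $(i,j)$ moment hypothesis is equivalent to $\|I(f_n^{(j)})I(f_n^{(i)})\|_2^2\to\lambda_i\lambda_j$. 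Applying the same chaos decomposition to $I(f_n^{(j)})I(f_n^{(i)})$ itself and subtracting the controlled limits $\|f_n^{(j)}\|^2\|f_n^{(i)}\|^2\to\lambda_i\lambda_j$ and $|\langle f_n^{(i)},f_n^{(j)}\rangle|^2\to 0$ leaves the non-negative remainder $\|f_n^{(j)}\stackrel{q/2}\smallfrown f_n^{(i)}\|^2+\sum_{k\ne q/2}\|f_n^{(j)}\stackrel{k}\smallfrown f_n^{(i)}\|^2\to 0$, forcing every cross contraction to vanish.

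The reverse (``if'') implication is obtained by substituting the contraction-vanishing hypotheses directly into the same identities. The main obstacle is the coupling between the two cases in the forward direction: the cross-contraction norms cannot be isolated from the $(i,j)$ moment condition until the diagonal contraction condition has been extracted from the $(j,j)$ case. Once this two-stage structure is in place, the remaining work is careful bookkeeping with the product formula, the definition of $f^*$, and the orthogonality of distinct-order Wigner chaos.
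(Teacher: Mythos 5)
Your proof is correct and follows essentially the same route as the paper's: rewrite the fourth-moment quantity as $\|I(f_n^{(j)})I(f_n^{(i)})-I(f_n^{(j)})\|_2^2-\|f_n^{(j)}\|_{L^2}^2$, decompose into orthogonal Wigner chaoses via the product formula $(1.3)$ and $(1.2)$, and in the off-diagonal case first extract the diagonal contraction condition and use it to annihilate the inner product $\langle f_n^{(i)}, f_n^{(j)}\stackrel{q/2}\smallfrown f_n^{(j)}\rangle$. The only cosmetic differences are that you prove the $i=j$ case directly instead of citing Lemma 5.1 of \cite{NP}, and you obtain that inner product as $\varphi(I(f_n^{(i)})I(f_n^{(j)})^2)$ by chaos orthogonality (then working with $\|I(f_n^{(j)})I(f_n^{(i)})\|_2^2$) rather than via the paper's explicit change-of-variables identity $\langle f_n^{(i)}\stackrel{q/2}\smallfrown f_n^{(j)}, f_n^{(j)}\rangle=\langle f_n^{(i)}, f_n^{(j)}\stackrel{q/2}\smallfrown f_n^{(j)}\rangle$.
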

\begin{proof}
When $i=j$, this lemma is Lemma 5.1 in \cite{NP}.

Now we consider $i\ne j$. In this case,  by the product formula (1.3), we have
\begin{align*}
 &I(f_{n}^{(i)})I(f_{n}^{(j)})-I(f_{n}^{(j)})\\
 =&\langle f_n^{(i)}, f_n^{(j)}\rangle +I(f_{n}^{(i)}\stackrel{0}\smallfrown f_{n}^{(j)})+I(f_{n}^{(i)}\stackrel{q/2}\smallfrown f_{n}^{(j)}-f_{n}^{(j)})+\sum_{1\le r\le q-1, r\ne q/2}I(f_{n}^{(i)}\stackrel{r}\smallfrown f_{n}^{(j)}).
 \end{align*}
  By (1.2) and (1.0), we get
 \begin{align*}
 &\lim_{n\rightarrow\infty}\varphi((I(f_{n}^{(i)})I(f_{n}^{(j)})-I(f_{n}^{(j)}))(I(f_{n}^{(j)})I(f_{n}^{(i)})-I(f_{n}^{(j)}))\\
 =& \lambda_i\lambda_j
 +\lim_{n\rightarrow\infty}(\|f_{n}^{(i)}\stackrel{q/2}\smallfrown f_{n}^{(j)}-f_{n}^{(j)}\|^2+\sum_{1\le r<q, r\ne q/2}\|f_{n}^{(i)}\stackrel{r}\smallfrown f_{n}^{(j)}\|^2)\\
 =&\lambda_i\lambda_j+\lim_{n\rightarrow \infty} (\|f_{n}^{(i)}\stackrel{q/2}\smallfrown f_{n}^{(j)}\|^2+\|f_n^{(j)}\|^2+2\Re\langle f_{n}^{(i)}\stackrel{q/2}\smallfrown f_{n}^{(j)},f_n^{(j)} \rangle+ \sum_{1\le r<q, r\ne q/2}\|f_{n}^{(i)}\stackrel{r}\smallfrown f_{n}^{(j)}\|^2)\\
 =&\lambda_i\lambda_j+\lambda_j +\lim_{n\rightarrow \infty} (\|f_{n}^{(i)}\stackrel{q/2}\smallfrown f_{n}^{(j)}\|^2+2\Re\langle f_{n}^{(i)}\stackrel{q/2}\smallfrown f_{n}^{(j)},f_n^{(j)} \rangle +\sum_{1\le r<q, r\ne q/2}\|f_{n}^{(i)}\stackrel{r}\smallfrown f_{n}^{(j)}\|^2).\\
 \end{align*}
 On the other hand,
 \begin{align*}
 &\lim_{n\rightarrow \infty}\varphi((I(f_{n}^{(i)})I(f_{n}^{(j)})-I(f_{n}^{(j)}))(I(f_{n}^{(j)})I(f_{n}^{(i)})-I(f_{n}^{(j)}))\\
 =&\lim_{n\rightarrow \infty}(\varphi(I(f_{n}^{(i)})I(f_{n}^{(j)})^2I(f_{n}^{(i)})-2I(f_{n}^{(i)})I(f_{n}^{(j)})^2))+\lambda_j.
 \end{align*}
 We thus get that  $\lim_{n\rightarrow \infty}(\varphi(I(f_{n}^{(i)})I(f_{n}^{(j)})^2I(f_{n}^{(i)})-2I(f_{n}^{(i)})I(f_{n}^{(j)})^2))=\lambda_i\lambda_j$ if and only if $$\lim_{n\rightarrow \infty} (\|f_{n}^{(i)}\stackrel{q/2}\smallfrown f_{n}^{(j)}\|^2+2\Re\langle f_{n}^{(i)}\stackrel{q/2}\smallfrown f_{n}^{(j)},f_n^{(j)} \rangle +\sum_{1\le r<q, r\ne q/2}\|f_{n}^{(i)}\stackrel{r}\smallfrown f_{n}^{(j)}\|^2)=0.\eqno (2.2)$$
Let $\overline{s}=(s_1, \cdots, s_{q/2}), \overline{s}^r=(s_{q/2}, \cdots, s_1), d\overline{s}=ds_1\cdots ds_{q/2}$, $\overline{t_1}=(t_1, \cdots, t_{q/2}), d\overline{t_1}=dt_1 \cdots dt_{q/2}$, and $\overline{t_2}=(t_{q/2+1}, \cdots, t_q), d\overline{t_2}=dt_{q/2+1}\cdots dt_q$.  We have
\begin{align*}
\langle f_n^{(i)}\stackrel{q/2}\smallfrown f_n^{(j)}, f_n^{(j)}\rangle=&\int f_n^{(i)}(\overline{t_1}, \overline{s^r})f_n^{(j)}(\overline{s}, \overline{t_2})\overline{f_n^{(j)}}(\overline{t_1},  \overline{t_2})d\overline{s}d\overline{t_1} d\overline{t_2}\\
=&\int f_n^{(i)}(\overline{t_1}, \overline{s^r})(f_n^{(j)}\stackrel{q/2}\smallfrown f^{(j)}_n)(\overline{s},\overline{t_1^r})d\overline{s}d\overline{t_1}\\
=& \int f_n^{(i)}(\overline{t_1}, \overline{s^r})(\overline{f_n^{(j)}\stackrel{q/2}\smallfrown f^{(j)}}_n)(\overline{t_1},\overline{s^r})d\overline{s}d\overline{t_1}\\
=&\langle f^{(i)}_n, f_n^{(j)}\stackrel{q/2}\smallfrown f^{(j)}_n \rangle,
\end{align*}
where second and third equalities  hold because  $f_n^{(j)}$ and $f^{(j)}_n\stackrel{q/2}\smallfrown f_n^{(j)}$ are  symmetric, respectively.

Now suppose that $(2.1)$ holds, by the conclusion of the case that $i=j$ (see also Lemma 5.1 in \cite{NP}), $\lim_{n\rightarrow \infty }\|f_n^{(j)}\stackrel{q/2}\smallfrown f_n^{(j)}-f_n^{(j)}\|=0$. It follows that
\begin{align*}
\lim_{n\rightarrow \infty}|\langle f_n^{(i)}\stackrel{q/2}\smallfrown f_n^{(j)}, f_n^{(j)}\rangle|=&\lim_{n\rightarrow \infty}|\langle f_n^{(i)}, f_n^{(j)}\stackrel{q/2}\smallfrown f^{(j)}_n\rangle|\\
\le &\lim_{n\rightarrow \infty}|\langle f_n^{(i)}, f_n^{(j)}\rangle|+\lim_{n\rightarrow \infty}|\langle f_n^{(i)}, f_n^{(j)}\stackrel{q/2}\smallfrown f^{(j)}_n-f_n^{(j)}\rangle|=0.
\end{align*}
Combining the above discussion with $(2.2)$, we get $$\lim_{n\rightarrow \infty}\|f_n^{(i)}\stackrel{r}\smallfrown f_n^{(j)}\|=0,\eqno (2.3)$$ for $r=1, 2, \cdots, q-1$.

Conversely, if $(2.3)$ hold, we get $(2.2)$. Hence, $(2.1)$ holds for $i\ne j$.
\end{proof}
\begin{Lemma}
Let $q\ge 2$ be an even integer, and consider a bi-indexed sequence $\{f_{n}^{(i)}: n, i\in \mathbb{N}\}$ of  symmetric functions in $L^2(\mathbb{R}_+^q)$ such that $\lim_{n\rightarrow \infty}\langle f_{n}^{(i)}, f_{n}^{(j)}\rangle_{L^2(\mathbb{R}_+^q)}=\delta_{ij}\lambda_i$, $\lambda_i>0$ for all $i, j\in \mathbb{N}$.  If $\lim_{n\rightarrow \infty}\|f_{n}^{(i)}\stackrel{q/2}\smallfrown f_{n}^{(i)}-f_{n}^{(i)}\|_{L^2(\mathbb{R}_+^q)}= 0$, $\lim_{n\rightarrow \infty}\|f_n^{(i)}\stackrel{r}\smallfrown f_n^{(i)}\|=0$, for all $i=1, 2, \cdots$, and $1\le r\le q-1$ and $r\ne q/2$, and $\lim_{n\rightarrow \infty}\|f_n^{(i)}\stackrel{r}\smallfrown f_n^{(j)}\|=0$, for $r=1, 2, \cdots, q-1$, all $i\ne j$, then for $1<m\in \mathbb{N}$ and $\chi:\{1, 2, \cdots, m\}\rightarrow \mathbb{N}$, we have
 \begin{align*}&\lim_{n\rightarrow \infty}\sum_{(r_1, r_2, \cdots, r_{m-1})\in D_m}(\cdots (f_{n}^{(\chi(1))}\stackrel{r_1}\smallfrown f_{n}^{(\chi(2))})\stackrel{r_2}\smallfrown f_{n}^{(\chi(3))}\cdots\stackrel{r_{m-2}}\smallfrown f_{n}^{(\chi(m-1)})\stackrel{r_{m-1}}\smallfrown f_{n}^{(\chi(m))}\\
  =&\varphi(a_{\chi(1)}\cdots a_{\chi(m)}),
  \end{align*}
   where $D_m=B_m\cap \{0,\frac{q}{2}, q\}^{(m-1)}$, and $\{a_i:i=1, 2, \cdots\}$ is a free sequence of free Poisson random variables $a_i$ with parameters $\lambda_i>0$, $i=1, 2, \cdots$.
\end{Lemma}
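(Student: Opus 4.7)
The plan is to evaluate each iterated contraction indexed by a tuple $(r_1, \ldots, r_{m-1}) \in D_m$ in the $n\to\infty$ limit and then recognize the resulting sum as the noncrossing-partition formula for $\varphi(a_{\chi(1)}\cdots a_{\chi(m)})$ supplied by Theorem~2.3.

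I would first unpack the restriction $r_j \in \{0, q/2, q\}$: $r_j = 0$ is a tensor product, $r_j = q/2$ is a half-contraction preserving $q$ variables (since $2q - q = q$), and $r_j = q$ is a full inner product producing a scalar. Tracking the variable count $V_j = jq - 2(r_1+\cdots+r_{j-1})$ of the intermediate function and normalizing by $q$, each tuple in $D_m$ encodes a Motzkin-style path of length $m-1$ from height $1$ to $0$, with steps in $\{+1, 0, -1\}$ staying non-negative. Next, I would apply the hypotheses termwise: whenever a half- or full-contraction is performed between subfactors with different $\chi$-indices, the assumption $\|f_n^{(i)}\stackrel{r}\smallfrown f_n^{(j)}\|_{L^2} \to 0$ (for $i \ne j$ and $1 \le r \le q-1$) together with repeated Cauchy-Schwarz kills the entire term in the limit; for tuples whose index pattern is compatible with the path, the hypothesis $f_n^{(i)}\stackrel{q/2}\smallfrown f_n^{(i)} \to f_n^{(i)}$ in $L^2$ lets me collapse each half-contraction, leaving a product of pairings $\langle f_n^{(i)}, f_n^{(i)}\rangle \to \lambda_i$, one factor $\lambda_{\chi(V)}$ per block traced out by the path.

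The next step is the combinatorial identification. Reading each path left-to-right as operations on a stack of currently open blocks: $r_j = 0$ pushes a new block containing position $j+1$, $r_j = q/2$ extends the current top block to include $j+1$, and $r_j = q$ closes the top block by appending $j+1$ and emitting the scalar $\lambda_{\chi(V)}$. This sets up a bijection between the index-compatible tuples in $D_m$ and the relevant subset of $\{\pi \in NC(m) : \pi \le \chi\}$, so that summing over $D_m$ produces $\sum_{\pi}\prod_{V\in\pi}\lambda_{\chi(V)} = \varphi(a_{\chi(1)}\cdots a_{\chi(m)})$ by Theorem~2.3.

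The main obstacle is rigorously controlling the cascade of $L^2$-errors produced by the $O(m)$ successive replacements of $f_n^{(i)}\stackrel{q/2}\smallfrown f_n^{(i)}$ by $f_n^{(i)}$ through the nested contractions; this requires uniform $L^2$-boundedness of all intermediate partial functions (following from the convergence $\|f_n^{(i)}\|_2 \to \sqrt{\lambda_i}$) together with the standard estimate $\|h \stackrel{r}\smallfrown k\|_{L^2} \le \|h\|_{L^2}\|k\|_{L^2}$ on arc contractions. The combinatorial identification is clean once the Motzkin-path bookkeeping is in place, but the recursion must be propagated with care to ensure that the error terms still vanish after $m-1$ iterations.
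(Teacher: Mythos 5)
Your proposal follows essentially the same route as the paper's proof: termwise evaluation of each tuple in $D_m$ using the contraction Cauchy--Schwarz bound $\|h\stackrel{r}\smallfrown k\|\le\|h\|\,\|k\|$ to kill cross-index terms, collapse of the same-index $q/2$-contractions to produce one factor $\lambda_{\chi(V)}$ per block, and a bijection between the surviving tuples and the noncrossing partitions $\pi\le\chi$ with all blocks of size at least two, concluding via the moment--cumulant formula of Theorem~2.2. Your Motzkin-path/stack bookkeeping is just a compact rephrasing of the paper's tensor-factor decomposition $G_j = c\,H_1\otimes\cdots\otimes H_l$ and its interval-block peeling for the converse direction, so the two arguments are the same in substance.
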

\begin{proof}
We follow the ideas of the proof of Lemma 5.2 in \cite{NP}. For $r_1, \cdots r_{j}\in \{0, q/2, q\}$, let $$G_0:=f_n^{(\chi(1))},G_1:=f_n^{(\chi(1))}\stackrel{r_1}\smallfrown f_n^{(\chi(2))}, \cdots, G_j:=G_{j-1}\stackrel{r_{j}}\smallfrown f_n^{(\chi(j+1))}, j=2, 3, \cdots.$$ We show that $G_j$ is either a scalar or a multiple of an object of the type $H_1\otimes\cdots \otimes H_l$, $l\ge 1$, where $H_i=f_n^{(l)}$, or $H_i=f_n^{(s_1)}\stackrel{q/2}\smallfrown\cdots \stackrel{q/2}\smallfrown f_n^{(s_k)}$. We prove it by  induction on $j$. When $j=1$, $G_j=f_n^{(\chi(1))}\otimes f_n^{(\chi(2))}$, if $r_1=0$; $G_1=f_n^{(\chi(1))}\stackrel{q/2}\smallfrown f_n^{(\chi(2))}$, if $r_1=q/2$; $G_1=\langle f_n^{(\chi(1))},f_n^{(\chi(2))}\rangle$, is a scalar, if $r_1=q$. Suppose that $G_{j-1}$ has the desired form. We prove that so does $G_j$. Let $$G_{j-1}=H_1\otimes\cdots\otimes H_l, l\ge 0,$$ where $G_{j-1}$ is a scalar, when $l=0$. If $r_j=0$, $G_j=H_1\otimes\cdots\otimes H_l\otimes f_n^{(\chi(j+1))}$. If $r_j=q/2$, $G_j=H_1\otimes\cdots\otimes (H_{l}\stackrel{q/2}\smallfrown f_n^{(\chi(j+1))})$. If $r_j=q$, then $G_j=\langle H_l, f_n^{(\chi(j+1))}\rangle H_1\otimes\cdots\otimes H_{l-1}$. It follows that $G_j$ has the desired form in all cases.

Now we prove that $$\langle f_{n}^{(i_1)}\stackrel{q/2}\smallfrown\cdots\stackrel{q/2}\smallfrown f_{n}^{(i_k)}, f_{n}^{(i_{k+1})}\rangle\rightarrow \left\{\begin{array}{ll}\lambda_{i_1},&\text{if } i_1=i_2=\cdots=i_{k+1};\\
0,& \text{otherwise}
\end{array}\right.   \eqno (2.4)$$ as $n\rightarrow \infty$. We need a Cauchy-Schwartz type inequality. For  $f\in L^2(\mathbb{R}_+^p)$, $g\in L^2(\mathbb{R}_+^q)$, and $0\le r\le \min\{p,q\}$, we prove that $$\|f\stackrel{r}\smallfrown g\|_{\mathbb{R}_+^{p+q-2r}}^2\le \|f\|_{\mathbb{R}_+^p}^2\|g\|_{\mathbb{R}_+^q}^2.\eqno (2.5)$$
It is obvious that we need only to prove the non-trivial case $0<r<\min\{p,q\}$. We use some vector notations to simplify the expressions. Let $\overline{x}_1=(x_1, x_2, \cdots, x_{p-r}), \overline{t}^i=(t_r, t_{r-1}, \cdots, t_1), \overline{t}=(t_1, t_2, \cdots, t_r), \overline{x}_2=(x_{q-r+1}, \cdots, x_{p+q-2r})$. We then have
\begin{align*}
\|f\stackrel{r}\smallfrown g\|^2_{\mathbb{R}_+^{p+q-2r}}&=\int_{\mathbb{R}_+^{p+q-2r}}|f\stackrel{r}\smallfrown g|^2d\overline{x_1}d\overline{x}_2\\
&\le \int_{\mathbb{R}_+^{p+q-2r}}(\int_{\mathbb{R}_+^{i}}|f(\overline{x}_1, \overline{t}^i)| |g(\overline{t}, \overline{x}_2)|d\overline{t})^2d\overline{x_1}d\overline{x}_2\\
&\le  \int_{\mathbb{R}_+^{p+q-2r}}\int_{\mathbb{R}_+^{r}}|f(\overline{x}_1, \overline{t}^i)|^2d\overline{t} \int_{\mathbb{R}_+^r}|g(\overline{t}, \overline{x}_2)|^2d\overline{t}d\overline{x_1}d\overline{x}_2\\
&=\int_{\mathbb{R}_+^{p-r}}\int_{\mathbb{R}_+^{r}}|f(\overline{x}_1, \overline{t}^i)|^2d\overline{t}d\overline{x}_1 \int_{\mathbb{R}_+^{q-r}}\int_{\mathbb{R}_+^r}|g(\overline{t}, \overline{x}_2)|^2d\overline{t}d\overline{x}_2\\
&=\|f\|^2_{L^2(\mathbb{R}_+^p)}\|g\|^2_{L^2(\mathbb{R}_+^q)}.
\end{align*}

When $i_1=i_2=\cdots=i_{k+1}$, $(2.4)$ is $(5.2)$ in Lemma 5.2 in \cite{NP}. If $i_1=\cdots=i_k\ne i_{k+1}$,
we have
\begin{align*}
&\lim_{n\rightarrow \infty}|\langle f_{n}^{(i_1)}\stackrel{q/2}\smallfrown\cdots\stackrel{q/2}\smallfrown f_{n}^{(i_k)}, f_{n}^{(i_{k+1})}\rangle|\\
\le&\lim_{n\rightarrow\infty}|\langle (f_{n}^{(i_1)}\stackrel{q/2}\smallfrown f_{n}^{(i_2)}-f_{n}^{(i_2)})\stackrel{q/2}\smallfrown\cdots\stackrel{q/2}\smallfrown f_{n}^{(i_k)}, f_{n}^{(i_{k+1})}\rangle|\\
+&\lim_{n\rightarrow\infty}|\langle f_{n}^{(i_2)}\stackrel{q/2}\smallfrown f_{n}^{(i_3)}\stackrel{q/2}\smallfrown\cdots\stackrel{q/2}\smallfrown f_{n}^{(i_k)}, f_{n}^{(i_{k+1})}\rangle|\\
\le & \lim_{n\rightarrow\infty}\|f_{n}^{(i_1)}\stackrel{q/2}\smallfrown f_{n}^{(i_2)}-f_{n}^{(i_2)}\|\|f_{n}^{(i_3)}\|\cdots\|f_{n}^{(i_k)}\|\|f_{n}^{(i_{k+1})}\|\\
+&\lim_{n\rightarrow\infty}|\langle f_{n}^{(i_2)}\stackrel{q/2}\smallfrown f_{n}^{(i_3)}\stackrel{q/2}\smallfrown\cdots\stackrel{q/2}\smallfrown f_{n}^{(i_k)}, f_{n}^{(i_{k+1})}\rangle|\\
=&\lim_{n\rightarrow\infty}|\langle f_{n}^{(i_2)}\stackrel{q/2}\smallfrown f_{n}^{(i_3)}\stackrel{q/2}\smallfrown\cdots\stackrel{q/2}\smallfrown f_{n}^{(i_k)}, f_{n}^{(i_{k+1})}\rangle|\\
\le &\cdots\\
\le &\lim_{n\rightarrow \infty}|\langle f_{n}^{(i_k)}, f_{n}^{(i_{k+1})}\rangle|=0.
\end{align*}
 When $|\{i_1, \cdots, i_k\}|>1$, let $i_p$ be the first index in the ordered sequence $\{i_1, i_2, \cdots, i_k\}$ such that $i_1\ne i_p$. By $(2.5)$ and the above discussion, we get
 $$\lim_{n\rightarrow \infty}|\langle f_{n}^{(i_1)}\stackrel{q/2}\smallfrown\cdots\stackrel{q/2}\smallfrown f_{n}^{(i_k)}, f_{n}^{(i_{k+1})}\rangle|\le \lim_{n\rightarrow \infty}\|f_n^{(i_1)}\stackrel{q/2}\smallfrown f_n^{(i_p)}\|\|f_n^{(i_{p+1})}\|\cdots\|f_n^{(i_{k+1})}\|=0.$$ We've proved $(2.4)$.

For  $1<m\in \mathbb{N}$, and $\chi:\{1, 2, \cdots, m\}\rightarrow \mathbb{N}$,
we try to find $\lim_{n\rightarrow \infty}G_{m-1}$, where $$G_{m-1}:=(\cdots (f_n^{(\chi(1))}\stackrel{r_1}\smallfrown f_n^{(\chi(2))})\stackrel{r_2}\smallfrown f_n^{(\chi(3))}\cdots\stackrel{r_{m-2}}\smallfrown f_n^{(\chi(m-1))})\stackrel{r_{m-1}}\smallfrown f_n^{(\chi(m))},$$  $(r_1, r_2, \cdots, r_{m-1})\in D_m$. Since $r_1, \cdots, r_{m-1}\in D_m$, we have $r_{m-1}=q$, by the definitions of $A_m$ and $D_m$. Let $\{r_{j(1)}, r_{j(2)}, \cdots, r_{j(k-1)}, r_{m-1}\}$ be the set of all $q$'s in the sequence $r_1, r_2, \cdots, r_{m-1}$, where $j(1)<j(2)<\cdots <j(k-1)<m-1$. Let $l_1$ be the number such that $r_{j(1)-l_1}=0$, and $r_{j(1)-l_1+1}r_{j(1)-l_1+2}\cdots r_{j(1)}\ne 0$. Then $$G_{m-1}=G_{j(1)-l_1}\otimes f_{n}^{(\chi(j(1)-l_1+1))}\stackrel{q/2}\smallfrown f_n^{(\chi(j(1)-l_1+2))}\stackrel{q/2}\smallfrown \cdots \stackrel{q/2}\smallfrown f_{n}^{(\chi(j(1)))}\stackrel{q}\smallfrown f_{n}^{(\chi(j(1)+1))}\cdots,$$ if $l_1>1$; $G_{m-1}=G_{j(1)-l_1}\otimes f_n^{\chi(j(1))}\stackrel{q}\smallfrown f_n^{(\chi(j(1)+1))}\cdots$, if $l_1=1$,  where $$\alpha_1:=f_{n}^{(\chi(j(1)-l_1+1))}\stackrel{q/2}\smallfrown f_n^{(\chi(j(1)-l_1+2))}\stackrel{q/2}\smallfrown \cdots \stackrel{q/2}\smallfrown f_{n}^{(\chi(j(1)))}\stackrel{q}\smallfrown f_{n}^{(\chi(j(1)+1))},$$ $$\beta_1:=f_n^{\chi(j(1))}\stackrel{q}\smallfrown f_n^{(\chi(j(1)+1))}$$ are constants.  Let $V_1=\{j(1)-l_1+1, j(1)-l_1+2, \cdots, j(1)+1\}$.  Removing $\alpha_1$ or $\beta_1$ from $G_{m-1}$, we get a new expression $\widetilde{G}_{m-l_1}$, which has the same form as $G_{m-1}$. We continue the above procedure until we remove all factors in $G_{m-1}$.  We then get a non-crossing partition $\pi=\{V_1, V_2, \cdots, V_k\}$ in $NC_{\ge 2}(m)=\{\pi\in NC(m):|V|>1, \forall V\in \pi\}$. By the hypothesis and $(2.4)$,  $\lim_{n\rightarrow \infty}\alpha_1\beta_1\ne 0$, if and only if  $$\chi(j(1)-l_1+1)=\cdots =\chi(j(1)+1). $$ In this case, $\lim_{n\rightarrow \infty}\alpha_1=\lim_{n\rightarrow \infty}\beta_1=\lambda_{\chi(j(1))}$. It implies that  $\lim_{n\rightarrow\infty}G_{m-1}\ne 0$, if and only if  $\pi\le \chi$, that is, $\chi|_{V_i}$ must be a constant, for $i=1, 2, \cdots, k$.   Moreover, $$\lim_{n\rightarrow \infty}G_{m-1}=\prod_{V\in\pi}\lambda_{\chi(V)},$$ if $\pi\le \chi$. For $\overline{r}=(r_1, r_2, \cdots, r_{m-1})\in D_m$, the above process gives rise to a partition $\pi\in NC_{\ge 2}(m)$ denoted by $\pi(\overline{r})$. We denote the above $G_{m-1}$ by $G_{m-1}(\overline{r})$. We have proved that $$\lim_{n\rightarrow \infty}G_{m-1}(\overline{r})\ne 0$$ if and only if $\pi(\overline{r})\le \chi$.

Conversely, if $\pi\in NC_{\ge 2}(m)$ and $\pi\le \chi$, then we can arrange the blocks of $\pi$ as $V_1, V_2, \cdots, V_k$ so that the minimal element of $V_i$ is less than that of $V_j$, if $i<j$. Since $\pi$ is non-crossing, there are   interval blocks in $\pi$. Let $V_i$ be the interval block with minimum index. Let $V_i=\{p+1, p+2, \cdots, p+j\}$.   Define $r_{p+1}=r_{p+2}=\cdots=r_{p+j-2}=q/2, r_{p+j-1}=p$,if $j>2$; $r_{p+1}=q$, if $k=2$. Since $\pi\setminus \{V_i\}$ is still non-crossing, and less than $\chi|_{\{1, 2, \cdots, m\}\setminus \{p+1, \cdots, p+j\}}$, we can continue the process till we take all blocks of $\pi$. Each time when choosing $r\in \{q/2, q\}$ to link $i$ and $j$ ($i<j$) in a block, we define $r_{j-1}=r$. We have defined $r_i$'s connecting the ascending ordered set of numbers in each block of $\pi$. For the smallest number $m_i$ of block $V_i$,  we define $r_{m_i-1}=0$, if $m_i\ne 1$, for $i=1, 2, \cdots, k$.       We thus get a $r_1, \cdots, r_{m-1}\in \{0, q/2, q\}$. We now show that $$2r_1+2r_2+\cdots+2r_{m-1}=mq.$$ In fact, for an interval $V_i$,  we have $2r_{p+1}+\cdots 2r_{p+j-1}=2(|V_i|-2)\frac{q}{2}+2q=|V_i|q$. Note that every block must be an interval block in some step of the above process of definition $r_i$'s. Moreover, every non-zero $r_i$' must link two elements in a block, when the block is treated as an interval block. It follows that $2(r_1+\cdots+r_{m-1})=\sum_{V\in \pi}|V|q=mq$. We need to prove that $(r_1, r_2, \cdots, r_{m-1})\in A_m$. It is obvious that $(r_i)_{i\in V}\in A_{|V|}$, for a block $V\in \pi$. For each $r_l$, let $V_1, \cdots, V_p$ be  the blocks of $\pi$ such that $V_i':= V_i\cap \{1, 2, \cdots, l-1\}\ne \emptyset$, $i=1, 2, \cdots, p$. Then   $$lq-2\sum_{i=1}^{l-1}r_i=\sum_{i=1}^p(|V_i'|q-2\sum_{r_j \text{ connects  numbers in } V_i'}r_j)+(l-\sum_{i=1}^p|V_i'|)q\ge q \ge r_l.$$ Therefore, $\overline{r}:=(r_1, r_2, \cdots, r_{m-1})\in D_m$. Moreover, the construction of $\overline{r}$ shows that $\pi(\overline{r})=\pi$.  Since $\pi\le \chi$, by the previous proof, the express $G_{m-1}$ corresponding to $(r_1, r_2, \cdots, r_{m-1})$ has a limit of $\prod_{V\in\pi}\lambda_{\chi(V)}$.

We have established a one to one mapping from $$\mathfrak{F}:=\{\overline{r}:=(r_1, r_2, \cdots, r_{m-1})\in D_m: \lim_{n\rightarrow \infty}G_{m-1}(\overline{r})\ne 0\}$$ onto $\mathfrak{P}:=\{\pi\in NC_{\ge 2}(m):\pi\le \chi\}$.
Note that $$\lim_{n\rightarrow \infty}\sum_{(r_1, r_2, \cdots, r_{m-1})\in D_m}(\cdots (f_{n}^{(\chi(1))}\stackrel{r_1}\smallfrown f_{n}^{(\chi(2))})\stackrel{r_2}\smallfrown f_{n}^{(\chi(3))}\cdots\stackrel{r_{m-2}}\smallfrown f_{n}^{(\chi(m-1)})\stackrel{r_{m-1}}\smallfrown f_{n}^{(\chi(m))}\ne 0$$ if and only if $\mathfrak{F}\ne \emptyset$. Moreover, by Theorem 2.2, we have $\varphi(a_{\chi(1)}\cdots a_{\chi(m)})\ne 0$ if and only if $\mathfrak{P}\ne \emptyset$. Hence, $$\lim_{n\rightarrow \infty}\sum_{(r_1, r_2, \cdots, r_{m-1})\in D_m}(\cdots (f_{n}^{(\chi(1))}\stackrel{r_1}\smallfrown f_{n}^{(\chi(2))})\stackrel{r_2}\smallfrown f_{n}^{(\chi(3))}\cdots\stackrel{r_{m-2}}\smallfrown f_{n}^{(\chi(m-1)})\stackrel{r_{m-1}}\smallfrown f_{n}^{(\chi(m))}\ne 0$$ if and only if $\varphi(a_{\chi(1)}\cdots a_{\chi(m)})\ne 0$. If $\varphi(a_{\chi(1)}\cdots a_{\chi(m)})\ne 0$, we have
\begin{align*}&\lim_{n\rightarrow \infty}\sum_{(r_1, r_2, \cdots, r_{m-1})\in D_m}(\cdots (f_{n}^{(\chi(1))}\stackrel{r_1}\smallfrown f_{n}^{(\chi(2))})\stackrel{r_2}\smallfrown f_{n}^{(\chi(3))}\cdots\stackrel{r_{m-2}}\smallfrown f_{n}^{(\chi(m-1)})\stackrel{r_{m-1}}\smallfrown f_{n}^{(\chi(m))}\\
  =&\sum_{\pi\in NC_{\ge 2}(m), \pi\le \chi}\prod_{V\pi}\lambda_{\chi(V)}=\varphi(a_{\chi(1)}\cdots a_{\chi(m)}).
  \end{align*}
  \end{proof}
\begin{Remark}We want to give an example to illustrate the process of constructing $$(r_1, r_2, \cdots, r_{m-1})\in D_{m}$$ from $\pi\in NC_{\ge 2}(m)$. Consider a partition $$\pi=\{V_1=\{1,  5, 6, 9,  10\}, V_2=\{2, 3, 4\}, V_3=\{7, 8\}\}\in NC_{\ge 2}(10),$$  where $V_2$ is an interval block. Define $r_2=10/2=5, r_3=10$, since $|V_2|=3>2$. Removing $V_2$, we get a new non-crossing partition $\pi_1=\{V_1, V_3\}$ of $\{1, 5, 6, 7, 8, 9, 10\}$. $V_3$ is an interval block of $\pi_1$. Define $r_7=10$, since $|V_3|=2$. Removing $V_3$ from $\pi_1$, we get $\pi_2=\{1,  5, 6, 9,  10\}$, which is partition $1_5$ of $\{1,  5, 6, 9,  10\}$. Define $r_4=5, r_5=5, r_8=5, r_9=10$. Finally, define $r_1=r_6=0$. It is easy to verify that $(r_1, r_2, \cdots, r_9)\in D_{10}$.
\end{Remark}
\begin{Lemma}
Under the assumptions of Lemma 2.4, we have
 $$(\cdots (f_{n}^{(\chi(1))}\stackrel{r_1}\smallfrown f_{n} ^{(\chi(2))})\stackrel{r_2}\smallfrown f_{n}^{(\chi(3))}\cdots\stackrel{r_{m-2}}\smallfrown f_{n}^{(\chi(m-1))})\stackrel{r_{m-1}}\smallfrown f_{n}^{(\chi(m))}\rightarrow 0,$$ as $n\rightarrow \infty$, for $(r_1, r_2, \cdots, r_{m-1})\in E_m:=B_m\setminus D_m$, $\chi:\{1, 2, \cdots, m\}\rightarrow\mathbb{N}$, and $m>1$.
\end{Lemma}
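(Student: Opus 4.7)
The plan is to isolate the first ``bad'' index $j_0$ (where $r_{j_0}\in\{1,\dots,q-1\}\setminus\{q/2\}$, which exists because $\overline r\in B_m\setminus D_m$) and reduce everything downstream to a single self-contraction of two $f_n$'s that vanishes by the hypothesis of Lemma 2.4. For $j<j_0$ all $r_j\in\{0,q/2,q\}$, so the structural analysis carried out in the proof of Lemma 2.4 applies and gives
$$G_{j_0-1}=c_n\,H_1\otimes H_2\otimes\cdots\otimes H_l,$$
where $c_n\in\mathbb C$ is a bounded product of scalar inner-product factors (one for each step with $r_j=q$), each $H_k\in L^2(\mathbb{R}_+^q)$ is either a single $f_n^{(s)}$ or an iterated $q/2$-arc contraction $f_n^{(s_1)}\stackrel{q/2}\smallfrown\cdots\stackrel{q/2}\smallfrown f_n^{(s_p)}$, and $l\ge 1$ (otherwise $G_{j_0-1}$ would be a scalar and no further contraction with $r_{j_0}\ge 1$ would be defined).

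Because $0<r_{j_0}<q$, the next contraction lies entirely inside the rightmost factor $H_l$, so
$$G_{j_0}=c_n\,H_1\otimes\cdots\otimes H_{l-1}\otimes\bigl(H_l\stackrel{r_{j_0}}\smallfrown f_n^{(\chi(j_0+1))}\bigr),$$
and by Fubini $\|G_{j_0}\|=|c_n|\prod_{k<l}\|H_k\|\cdot\|H_l\stackrel{r_{j_0}}\smallfrown f_n^{(\chi(j_0+1))}\|$. The heart of the argument is the auxiliary estimate: for any $H\in L^2(\mathbb{R}_+^q)$, any symmetric $g\in L^2(\mathbb{R}_+^q)$, and $1\le r\le q-1$,
$$\|H\stackrel{r}\smallfrown g\|^2\le\|H\|^2\,\|g\stackrel{q-r}\smallfrown g\|.$$
To establish it, expand $\|H\stackrel{r}\smallfrown g\|^2$ as a $(2q-2r)$-fold integral and collect the $2r$ contraction variables via Fubini to write it as $\int K_H(\overline s,\overline u)K_g(s,u)\,ds\,du$, with $K_H(\overline s,\overline u)=\int H(x,\overline s)\overline{H(x,\overline u)}\,dx$ and $K_g(s,u)=\int g(s,y)\overline{g(u,y)}\,dy$. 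A Cauchy--Schwarz on the $x$-integral yields $\|K_H\|_{L^2}\le\|H\|^2$; exploiting $g=g^*$ and reversing the $y$-variable identifies $K_g(s,u)$ with $(g\stackrel{q-r}\smallfrown g)(s,u^r)$, so $\|K_g\|_{L^2}=\|g\stackrel{q-r}\smallfrown g\|$; a second Cauchy--Schwarz on the outer integral completes the estimate.

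Plugging $H=H_l$ and $g=f_n^{(\chi(j_0+1))}$ into the auxiliary bound gives
$$\bigl\|H_l\stackrel{r_{j_0}}\smallfrown f_n^{(\chi(j_0+1))}\bigr\|\le\|H_l\|\cdot\bigl\|f_n^{(\chi(j_0+1))}\stackrel{q-r_{j_0}}\smallfrown f_n^{(\chi(j_0+1))}\bigr\|^{1/2},$$
and since $r_{j_0}\notin\{0,q/2,q\}$ forces $q-r_{j_0}\notin\{0,q/2,q\}$, the hypothesis of Lemma 2.4 drives the last factor to $0$. The remaining factors $|c_n|,\|H_k\|,\|H_l\|$ are uniformly bounded by iterated use of (2.5) and $\|f_n^{(i)}\|^2\to\lambda_i$, so $\|G_{j_0}\|\to 0$; for $j>j_0$, iterating (2.5) yields $\|G_{m-1}\|\le\|G_{j_0}\|\cdot\prod_{j>j_0}\|f_n^{(\chi(j+1))}\|\to 0$. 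I expect the hard part to be the auxiliary estimate: $H_l$ is a complicated iterated contraction with no useful symmetry of its own, while Lemma 2.4 only controls single-step contractions of raw $f_n^{(i)}$'s, so the double Cauchy--Schwarz exploiting the symmetry of $g$ to replace the mixed contraction by the self-contraction $g\stackrel{q-r}\smallfrown g$ is what makes the reduction work without any detailed structural induction on $H_l$.
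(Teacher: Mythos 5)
Your argument is correct, and it follows the paper's skeleton up to the decisive step: like the paper, you isolate the first index $l$ with $r_l\notin\{0,q/2,q\}$, invoke the tensor-form decomposition of $G_{l-1}$ established inductively in the proof of Lemma 2.4, and absorb every contraction after step $l$ by iterating the Cauchy--Schwarz bound $(2.5)$. Where you genuinely differ is in how the critical factor $H_l\stackrel{r_l}\smallfrown f_n^{(\chi(l+1))}$ is killed. The paper runs a case analysis on the form of $H_l$: if $H_l$ is a single $f_n^{(j)}$ it applies the hypothesis directly; if $H_l$ is an iterated $q/2$-contraction it distinguishes whether all its indices coincide (then it collapses $H_l$ onto a single kernel using $\|f_n^{(i)}\stackrel{q/2}\smallfrown f_n^{(i)}-f_n^{(i)}\|\to 0$) or not (then it extracts a vanishing mixed contraction $\|f_n^{(j_1)}\stackrel{q/2}\smallfrown f_n^{(j_t)}\|\to 0$), so it uses both the self- and the mixed-contraction hypotheses. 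You instead prove the general estimate $\|H\stackrel{r}\smallfrown g\|^2\le\|H\|^2\,\|g\stackrel{q-r}\smallfrown g\|$ for arbitrary $H\in L^2(\mathbb{R}_+^q)$ and symmetric $g$, and this is sound: the Fubini rearrangement is legitimate because the same computation with $|H|,|g|$ is finite by $(2.5)$, the bound $\|K_H\|_{L^2}\le\|H\|^2$ is a Cauchy--Schwarz on the contracted variable, and the identification $K_g(s,u)=(g\stackrel{q-r}\smallfrown g)(s,u^r)$ uses exactly $g=g^*$; it is the free analogue of the contraction estimates standard in the fourth-moment literature (cf. \cite{KNPS}). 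Since $r_l\notin\{0,q/2,q\}$ forces $q-r_l\ne q/2$, the self-contraction hypothesis for the single kernel $f_n^{(\chi(l+1))}$ finishes the job. What your route buys is the elimination of all structural case analysis on $H_l$ (only $H_l\in L^2(\mathbb{R}_+^q)$ with bounded norm is needed) and the observation that Lemma 2.6 already follows from the self-contraction conditions alone, without the mixed-contraction assumptions; what the paper's route buys is that it needs no inequality beyond $(2.5)$ and recycles verbatim the manipulations already set up in Lemma 2.4.
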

\begin{proof}
 Let $l$ be the minimal index,  $1\le l\le m-1$, such that $r_l\ne q/2$, and $1\le r_l< q$. Let $G_0:=f_{n}^{(\chi(1))}$, $G_1=f_{n}^{(\chi(1))}\stackrel{r_1}\smallfrown f_{n}^{(\chi(2))}$, $\cdots, G_{m-1}=G_{m-2}\stackrel{r_{m-1}}\smallfrown f_{n}^{(\chi(m))}$. Since $r_i\in \{0, q/2, q\}$, for $i=1, 2, \cdots, l-1$, and $r_l\ge 1$, by the  proof of Lemma 2.4,   $G_{l-1}$ is a multiple of an object of the type $H_1\otimes\cdots \otimes H_k$, where $k\ge 1$, each $H_i$ is either equal to $f_{n}^{(j)}$ or to an iterated contraction of the type
 $$f_{n}^{(j_1)}\stackrel{q/2}\smallfrown \cdots\stackrel{q/2}\smallfrown f_{n}^{(j_s)}, $$ for some $s\ge 1$.  let $G_{l-1}=cH_1\otimes \cdots \otimes H_k$.

 Case I.   $H_k=f_{n}^{(j)}$. We then have
\begin{align*}
&(\cdots (f_{n}^{(\chi(1))}\stackrel{r_1}\smallfrown f_{n}^{(\chi(2))})\stackrel{r_2}\smallfrown f_{n}^{(\chi(3))}\cdots\stackrel{r_{m-2}}\smallfrown f_{n}^{(\chi(m-1))})\stackrel{r_{m-1}}\smallfrown f_{n}^{(\chi(m))}\\
=&(\cdots(cH_1\otimes \cdots \otimes H_{k-1})\otimes (f_{n}^{(j)}\stackrel{r_l}\smallfrown f_{n}^{(\chi(l+1))})\cdots)\stackrel{r_{m-1}}\smallfrown f_{n}^{(\chi(m))}.
\end{align*}
By (2.5) and the fact that $\lim_{n\rightarrow\infty}\|f_{n}^{(i)}\|=\sqrt{\lambda}$, for $ i\in \mathbb{N}$, we get
\begin{align*}
&|(\cdots (f_{n}{(\chi(1))}\stackrel{r_1}\smallfrown f_{n}^{(\chi(2))})\stackrel{r_2}\smallfrown f_{n}^{(\chi(3))}\cdots\stackrel{r_{m-2}}\smallfrown f_{n}^{ (\chi(m-1))})\stackrel{r_{m-1}}\smallfrown f_{n}^{(\chi(m))}|\\
=&|(\cdots(cH_1\otimes \cdots \otimes H_{k-1}\otimes (f_{n}^{(j)}\stackrel{r_l}\smallfrown f_{n}^{(\chi(l+1))})\cdots\stackrel{r_{m-1}}\smallfrown f_{n}^{(\chi(m))}|\\
\le &|c|\|H_1\|\cdots\|H_{k-1}\|\|f_{n}^{(j)}\stackrel{r_l}\smallfrown f_{n}^{(\chi(l+1))}\|\cdot\|f_{n}^{(\chi(l+2))}\|\cdots\|f_{n}^{(\chi(m))}\|\\
\rightarrow &0.
\end{align*}

Case II.  $H_k=f_{n}^{(j_1)}\stackrel{q/2}\smallfrown \cdots\stackrel{q/2}\smallfrown f_{n}^{ (j_s)}$. The proof is similar to Case I. The only difference is that, in this case, we need to prove that
$$\|(f_{n}^{ (j_1)}\stackrel{q/2}\smallfrown \cdots\stackrel{q/2}\smallfrown f_{n}^{ (j_s)})\stackrel{r_l}\smallfrown f_{n}^{( \chi(l+1))}\|\rightarrow 0,$$ as $n\rightarrow \infty$.
When $j_1=j_2=\cdots=j_s$, by the assumptions of this lemma,  (2.5), and the proof of $(2.4)$,  we have
$$\lim_{n\rightarrow\infty}\|(f_{n}^{(j_1)}\stackrel{q/2}\smallfrown \cdots\stackrel{q/2}\smallfrown f_{n}^{(j_s)})\stackrel{r_l}\smallfrown f_{n}^{(\chi(l+1))}\|
\le\lim_{n\rightarrow\infty}\|f_{n}^{(j_s)}\stackrel{r_l}\smallfrown f_{n}^{(\chi(l+1))}\|=0.$$

Finally, suppose that $j_t$ be the first number in the ordered sequence $j_1, j_2, \cdots, j_s$ such that $j_t\ne j_1$, that is, $j_1=j_2=\cdots=j_{t-1}\ne j_t$. By the above computations and $(2.5)$, we have
$$
\lim_{n\rightarrow \infty}\|(f_{n}^{ (j_1)}\stackrel{q/2}\smallfrown \cdots\stackrel{q/2}\smallfrown f_{n}^{ (j_s)})\stackrel{r_l}\smallfrown f_{n}^{( \chi(l+1))}\|
\le \lim_{n\rightarrow \infty}\|f_n^{(j_1)}\stackrel{q/2}\smallfrown f_n^{(j_t)}\|\cdot\|f_n^{(j_{t+1})}\|\cdots \|f_n^{(j_s)}\|f_n^{\chi(l+1)}\|=0.
$$
\end{proof}
We are able to  give a four-moment theorem for  the multidimensional free Poisson limit over the free Wigner chaos.
\begin{Theorem}
Let $q\ge 2$ be an even integer, $\{\alpha_i:\alpha_i\in \mathbb{R}, \alpha_i\ne 0, i=1, 2, \cdots\}$ and $\{\lambda_i:\lambda_i>0, \lambda_i\in \mathbb{R}\}$ be two sequences of numbers,  and $\{a_{i}:i=1,2,\cdots\}$ be a free sequence of centered free Poisson random variables $a_i$ with parameters $\lambda_i$ and $\alpha_i$ (that is, $\kappa_n(a_i)=\lambda_i\alpha_i^n, n\ge 2$, $\kappa_1(a_i)=0$) in a non-commutative probability space $(\A, \varphi)$. Let $\{I(f_n^{(i)}):i, n\in \mathbb{N}\}$ be a bi-indexed sequence of multiple integrals of order $q$ with respect to the free Brownian motion $S$, where $f_n^{(i)}$ is a symmetric function in $L^2(\mathbb{R}_+^q)$. Then the following two statements are equivalent.
\begin{enumerate}
\item $\{I(f_n^{(i)}): i=1, 2, \cdots\}$ converges in joint distribution to $\{a_i:i=1, 2, \cdots\}$, as $n\rightarrow \infty$;
\item The following equations
$$\lim_{n\rightarrow \infty}\langle f_n^{(i)}, f_n^{(j)}\rangle =\delta_{ij}\lambda_i\alpha_i^2, \eqno (2.6)$$
$$\lim_{n\rightarrow\infty}\varphi(I(f_n^{(i)})^4)=(2\lambda_i^2+\lambda_i)\alpha_i^4,\lim_{n\rightarrow\infty}\varphi(I(f_n^{(i)})^3)=\lambda_i\alpha_i^3,\eqno (2.7)$$
and $$\lim_{n\rightarrow\infty}\varphi(I(f_n^{(i)})^2I(f_n^{(j)})^2)=\lambda_i\lambda_j\alpha_i^2\alpha_j^2, \lim_{n\rightarrow\infty}\varphi(I(f_n^{(i)})I(f_n^{(j)})^2)=0, \text{if }i\ne j\eqno (2.8)$$ hold,  for all $i, j\in \mathbb{N}$.
\end{enumerate}
\end{Theorem}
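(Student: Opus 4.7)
The plan is to reduce to the case $\alpha_i=1$ by a scaling trick, and then invoke Lemmas 2.3, 2.4, and 2.6 in succession. Set $g_n^{(i)}:=f_n^{(i)}/\alpha_i$ and $b_i:=a_i/\alpha_i$. Since free cumulants are homogeneous, $\kappa_m(b_i)=\lambda_i$ for $m\ge 2$ and $\kappa_1(b_i)=0$, so each $b_i$ is a centered free Poisson variable with parameter $\lambda_i$, and $\{b_i\}$ inherits freeness from $\{a_i\}$. Because $I(f_n^{(i)})=\alpha_i I(g_n^{(i)})$, joint convergence of $\{I(f_n^{(i)})\}$ to $\{a_i\}$ is equivalent to joint convergence of $\{I(g_n^{(i)})\}$ to $\{b_i\}$, and the conditions (2.6)--(2.8) for $\{f_n^{(i)}\}$ translate verbatim into the analogous conditions for $\{g_n^{(i)}\}$ with every occurrence of $\alpha_i$ replaced by $1$. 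From now on I therefore assume $\alpha_i=1$.

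The direction $(1)\Rightarrow(2)$ is immediate: joint distributional convergence of $\{I(g_n^{(i)})\}$ to $\{b_i\}$ forces every fixed joint moment to converge, and Theorem 2.2 applied to $\{b_i\}$ gives exactly the prescribed limits appearing in (2.6)--(2.8).

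For $(2)\Rightarrow(1)$, the first step is to verify the hypothesis (2.1) of Lemma 2.3. When $i=j$, (2.7) yields $\varphi(I(g_n^{(i)})^4)-2\varphi(I(g_n^{(i)})^3)\to(2\lambda_i^2+\lambda_i)-2\lambda_i=2\lambda_i^2-\lambda_i$. When $i\ne j$, traciality of $\varphi$ gives $\varphi(I(g_n^{(i)})I(g_n^{(j)})^2I(g_n^{(i)}))=\varphi(I(g_n^{(i)})^2I(g_n^{(j)})^2)$, which combined with (2.8) yields the limit $\lambda_i\lambda_j-0=\lambda_i\lambda_j$. Together with the inner-product condition (2.6), Lemma 2.3 then supplies all the contraction estimates $\|g_n^{(i)}\stackrel{q/2}\smallfrown g_n^{(i)}-g_n^{(i)}\|\to 0$, $\|g_n^{(i)}\stackrel{r}\smallfrown g_n^{(i)}\|\to 0$ for $r\ne q/2$, and $\|g_n^{(i)}\stackrel{r}\smallfrown g_n^{(j)}\|\to 0$ for $i\ne j$ and $1\le r\le q-1$.

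With these contraction estimates in hand, I expand an arbitrary joint moment by the iterated product formula,
$$\varphi\bigl(I(g_n^{(\chi(1))})\cdots I(g_n^{(\chi(m))})\bigr)=\sum_{\overline{r}\in B_m}G_{m-1}(\overline{r}),$$
and split $B_m=D_m\sqcup E_m$. Lemma 2.6 shows that each summand over $E_m$ tends to $0$, while Lemma 2.4 identifies the limit of the sum over $D_m$ as $\varphi(b_{\chi(1)}\cdots b_{\chi(m)})$. Hence $\{I(g_n^{(i)})\}$ converges in joint distribution to $\{b_i\}$, and rescaling by $\alpha_i$ recovers convergence of $\{I(f_n^{(i)})\}$ to $\{a_i\}$. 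The main obstacle in this whole plan is the translation step from (2.7)--(2.8) to Lemma 2.3's hypothesis (2.1); once the contraction estimates are extracted, everything afterwards is a mechanical bookkeeping via the two combinatorial lemmas already in place.
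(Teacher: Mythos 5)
Your proposal is correct and follows essentially the same route as the paper: reduce to $\alpha_i=1$ by rescaling, obtain $(1)\Rightarrow(2)$ from convergence of joint moments, and for $(2)\Rightarrow(1)$ feed the contraction estimates from Lemma 2.3 into the product-formula expansion, splitting $B_m$ into $D_m$ and $E_m$ and applying Lemmas 2.4 and 2.6. Your explicit verification of hypothesis (2.1) from (2.7)--(2.8) via traciality is exactly the step the paper leaves implicit, and it is done correctly.
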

\begin{proof}
Replacing $a_i$ by $\frac{a_i}{\alpha_i}$, and $f_n^{(i)}$ by $\frac{f_n^{(i)}}{\alpha_i}$, we can assume that $\alpha_1=\alpha_2=\cdots =1$.

$(1)\Rightarrow (2)$. Note that $\lim_{n\rightarrow \infty}\langle f_n^{(i)}, f_n^{(j)}\rangle=\lim_{n\rightarrow \infty}\varphi(I(f_n^{(i)}f_n^{(j)}))=\varphi(a_ia_j)=\delta_{ij}\lambda_i$, for all $i, j=1, 2, \cdots$.
Moreover, for $i, j\in \mathbb{N}$, we have
$$\lim_{n\rightarrow\infty}\varphi(I(f_n^{(i)})I(f_n^{(j)})^2I(f_n^{(i)}))=\varphi(a_i^2a_j^2)=\left\{\begin{array}{ll}\varphi(a_i^4)=2\lambda_i^2+\lambda_i, &\text{if  } i=j;\\
\varphi(a_i^2)\varphi(a_j^2)=\lambda_i\lambda_j, &\text{if }i\ne j.\end{array}\right.$$
$$\lim_{n\rightarrow\infty}\varphi(I(f_n^{(i)})I(f_n^{(j)})^2)=\varphi(a_ia_j^2)=\left\{\begin{array}{ll}\varphi(a_i^3)=\lambda_i, &\text{if  } i=j;\\
\varphi(a_i)\varphi(a_j^2)=0, &\text{if }i\ne j.\end{array}\right.$$

Converse, for $m>1, m\in \mathbb{N}$ and $\chi:\{1, 2, \cdots, m\}\rightarrow \mathbb{N}$, the condition (2), Theorem 2.2, Lemmas 2.3, 2,4, and 2.6 imply that
\begin{align*}
&\lim_{n\rightarrow\infty}\varphi(I(f_n^{(\chi(1))})\cdots I(f_n^{(\chi(m))}))\\
=&\lim_{n\rightarrow\infty}\sum_{(r_1, r_2, \cdots, r_{m-1})\in B_m}(\cdots (f_n^{(\chi(1))}\stackrel{r_1}\smallfrown f_n^{(\chi(i2))})\stackrel{r_2}\smallfrown f_n^{(\chi(3))}\cdots\stackrel{r_{m-2}}\smallfrown f_n^{(\chi(m-1))})\stackrel{r_{m-1}}\smallfrown f_n^{(\chi(m))}\\
=&\lim_{n\rightarrow\infty}\sum_{(r_1, r_2, \cdots, r_{m-1})\in D_m}(\cdots (f_n^{(\chi(1))}\stackrel{r_1}\smallfrown f_n^{(\chi(2))})\stackrel{r_2}\smallfrown f_n^{(\chi(3))}\cdots\stackrel{r_{m-2}}\smallfrown f_n^{(\chi(m-1))})\stackrel{r_{m-1}}\smallfrown f_n^{(\chi(m))}\\
+&\sum_{(r_1, r_2, \cdots, r_{m-1})\in E_m}\lim_{n\rightarrow\infty}(\cdots (f_n^{(\chi(1))}\stackrel{r_1}\smallfrown f_n^{(\chi(2))})\stackrel{r_2}\smallfrown f_n^{(\chi(3))}\cdots\stackrel{r_{m-2}}\smallfrown f_n^{(\chi(m-1))})\stackrel{r_{m-1}}\smallfrown f_n^{(\chi(m))}\\
=&\lim_{n\rightarrow\infty}\sum_{(r_1, r_2, \cdots, r_{m-1})\in D_m}(\cdots (f_n^{(\chi(1))}\stackrel{r_1}\smallfrown f_n^{(\chi(2))})\stackrel{r_2}\smallfrown f_n^{(\chi(3))}\cdots\stackrel{r_{m-2}}\smallfrown f_n^{(\chi(m-1))})\stackrel{r_{m-1}}\smallfrown f_n^{(\chi(m))}\\
=&\varphi(a_{\chi(1)}\cdots a_{\chi(m)}).
\end{align*}
\end{proof}
\section{Multidimensional free Poisson limits on the free Poisson algebra}
This section is devoted to proving a four-moment theorem for the convergence of a multiple integral sequence of functions in $L^2(\mathbb{R}_+^q)$ ($q\ge 2$) with respect to a centered free Poisson random measure $\widehat{N}$. We still use $I(f)$ to denote the free stochastic integral of a function $f\in L^2(\mathbb{R}_+^q)$ with respect to $\widehat{N}$. Let's introduce some notations adopted from Section 3 of \cite{SB2}.

Let $i$ and $m$ be two non-negative integers such that $0\le i\le m-1$.  We define a multiset $M_i^m=\{1, \cdots, 1, 0, \cdots, 0\}$,  where the element $1$ has  multiplicity of $i$ and the element $0$ has multiplicity of $m-1-i$. Let $\mathfrak{S}^m_i$ be the permutation group of the multiset $M_i^m$. For $\sigma\in \mathfrak{S}_i^m$,  and a sequence $\{f_n: n\in \mathbb{N}\}$ of functions in $L^2(\mathbb{R}_+^q)$, we define the following
\begin{align*}
\mathfrak{A}_m^\sigma &=\{(r_1, \cdots, r_{m-1})\in \{0, 1, \cdots, q\}^{m-1}: \sigma(1)\le r_1, \\
&\sigma(p)\le r_p\le pq+\sum_{k=1}^{p-1}(\sigma(k)-2r_k), \forall 2\le p\le m-1\},\\
\mathfrak{B}_m^\sigma &=\{(r_1, \cdots, r_{m-1})\in \mathfrak{A}_m^\sigma: 2\sum_{k=1}^{m-1}r_k=mq+\sum_{p=1}^{m-1}\sigma(p)\}.
\end{align*}

When $q>2$ is odd, we define
$$
\mathfrak{D}_m^\sigma =\{(r_1, \cdots, r_{m-1})\in \mathfrak{B}_m^\sigma\cap \{0, \frac{q+1}{2},q\}^{m-1}: \forall 1\le j\le m-1,$$
$$r_j\in \{0, q\}\Leftrightarrow \sigma(j)=0, r_j=\frac{q+1}{2}\Leftrightarrow \sigma(j)=1\},
\mathfrak{E}_m^\sigma =\mathfrak{B}_m^\sigma\setminus \mathfrak{D}_m^\sigma.$$
$$(\bigstar_{r_p}^{r_p-\sigma(p)}f_p)_{p=1}^{m-1}:=(\cdots ((f_1\star_{r_1}^{r_1-\sigma(1)}f_2)\star_{r_2}^{r_2-\sigma(2)}f_{3})\cdots \star_{r_{m-2}}^{r_{m-2}-\sigma(m-2)}f_{m-1})\star_{r_{m-1}}^{r_{m-1}-\sigma(m-1)} f_{m},$$
$$(\stackrel{r_p}\smallfrown f_p)_{p=1}^{m-1}:=(\cdots ((f_1\stackrel{r_1}\smallfrown f_2)\stackrel{r_2}\smallfrown f_3)\cdots\stackrel{r_{m-2}}\smallfrown f_{m-1})\stackrel{r_{m-1}}\smallfrown f_m.$$

In order to prove the main result in this section, we need some technical results.
\begin{Lemma} Let $q> 1$ and $m\ge 2$ be integers.  Let $\{f_{n}:1\le n\le m\}$ be a (finite) sequence of functions in $L^2(\mathbb{R}_+^q)$. Then
$$I(f_1)I(f_2)\cdots I(f_m)=\sum_{i=0}^{m-1}\sum_{\sigma\in \mathfrak{S}_i^m}\sum_{(r_1, \cdots, r_{m-1})\in \mathfrak{A}_m^\sigma}I((\bigstar_{r_p}^{r_p-\sigma(p)}f_p)_{p=1}^{m-1}). \eqno (3.1)$$
and $$\varphi(I(f_1)I(f_2)\cdots I(f_m))=\sum_{i=0}^{\lfloor(m-2)/2\rfloor}\sum_{\sigma\in \mathfrak{S}_{2i+\pi(qm)}^{m-1}}\sum_{(r_1, \cdots, r_{m-2})\in \mathfrak{B}_{m-1}^\sigma}((\bigstar_{r_p}^{r_p-\sigma(p)}f_p)_{p=1}^{m-2}\stackrel{q}\smallfrown f_m), \eqno(3.2)$$
where $\pi(n)=0$, if $n$ is even; $\pi(n)=1$, if $n$ is odd.
\end{Lemma}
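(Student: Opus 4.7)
My plan is to prove the first identity by induction on $m\ge 2$, and then to obtain the second by applying the tracial state $\varphi$ to the expansion and using the orthogonality relation $(1.5)$ to retain only scalar contributions.

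The base case $m=2$ is exactly the product rule $(1.6)$: arc-contraction terms $I(f_1\stackrel{k}\smallfrown f_2)$ correspond to $\sigma\equiv 0\in\mathfrak S_0^2$ with $0\le r_1\le q$, while star-contraction terms $I(f_1\star_k^{k-1}f_2)$ correspond to $\sigma\equiv 1\in\mathfrak S_1^2$ with $1\le r_1\le q$, and both families together are encoded by $\sigma(1)\le r_1\le q$, which is the defining condition of $\mathfrak A_2^\sigma$. For the inductive step I would write $I(f_1)\cdots I(f_m)=[I(f_1)\cdots I(f_{m-1})]\,I(f_m)$, expand the bracket by the inductive hypothesis, and apply $(1.6)$ to each factor $I(h)\cdot I(f_m)$. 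Each such product splits into arc and star contributions labelled by $\sigma(m-1)\in\{0,1\}$, with $r_{m-1}$ ranging over $\sigma(m-1)\le r_{m-1}\le\min(v_{m-2},q)$, where $v_{m-2}$ is the number of variables of $h$. A short induction on the variable count of the intermediate function yields
\[
v_{p}=(p+1)q-2\sum_{k=1}^{p}r_k+\sum_{k=1}^{p}\sigma(k),
\]
and substituting $p=m-1$ into the bound $pq+\sum_{k=1}^{p-1}(\sigma(k)-2r_k)$ appearing in the definition of $\mathfrak A_m^\sigma$ reproduces exactly $v_{m-2}$; the auxiliary bound $r_{m-1}\le q$ is built into $r_{m-1}\in\{0,\ldots,q\}$. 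Thus the newly admissible tuples are in bijection with $\mathfrak A_m^\sigma$, closing the induction.

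To deduce the second identity I would expand $I(f_1)\cdots I(f_{m-1})$ by the first identity and then evaluate each $\varphi(I(h)\,I(f_m))$; by the dual form of $(1.5)$ this equals $\delta_{q_h,q}\,(h\stackrel{q}\smallfrown f_m)$, so only tuples with $v_{m-2}=q$ survive. Using the variable-count formula above, the condition $v_{m-2}=q$ becomes a single linear relation between $\sum_{k=1}^{m-2} r_k$ and $\sum_{k=1}^{m-2}\sigma(k)$, which is the relation cutting out $\mathfrak B_{m-1}^\sigma$ as used in the statement. The parity factor $\pi(qm)$ enters because $\sum\sigma(k)$ must have the right parity mod $2$ for the equation defining $\mathfrak B_{m-1}^\sigma$ to have an integral solution in $\sum r_k$; writing $\sum\sigma(k)=2i+\pi(qm)$ converts this parity constraint into the outer summation $0\le i\le\lfloor(m-2)/2\rfloor$ together with the requirement $\sigma\in\mathfrak S_{2i+\pi(qm)}^{m-1}$.

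The main anticipated difficulty is the bijective bookkeeping in the inductive step: one must verify that appending $(\sigma(m-1),r_{m-1})$ to a tuple of $\mathfrak A_{m-1}^\sigma$ produces every tuple of $\mathfrak A_m^\sigma$ exactly once, which rests on identifying the upper bound $\min(v_{m-2},q)$ supplied by the product rule $(1.6)$ with the bound encoded in $\mathfrak A_m^\sigma$ via the variable-count identity above. Once this identification is in place, the deduction of the second identity from the first is a routine orthogonality-plus-parity computation.
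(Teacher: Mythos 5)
Your proposal is correct and is essentially the argument the paper relies on: the paper offers no independent proof, merely observing that the proofs of Lemmas 4.1 and 4.2 in \cite{SB2} --- induction on $m$ via the product formula $(1.6)$, then applying $\varphi$ and retaining only the scalar terms via $(1.5)$ --- carry over verbatim because they only manipulate the indices $r_i$, $r_i-\sigma(i)$ and the sets $\mathfrak{A}_m^\sigma$, $\mathfrak{B}_m^\sigma$, $\mathfrak{S}_i^m$, never the kernels; your induction and orthogonality steps, including the variable-count identity and the parity bookkeeping, are exactly that argument. One caution at the point you pass over quickly: write the survival condition out explicitly. It reads $2\sum_{k=1}^{m-2}r_k-\sum_{k=1}^{m-2}\sigma(k)=(m-2)q$ (the iterated kernel must have exactly $q$ variables), whereas substituting $m-1$ into the paper's displayed definition of $\mathfrak{B}_m^\sigma$ yields $2\sum_{k=1}^{m-2}r_k-\sum_{k=1}^{m-2}\sigma(k)=(m-1)q$, which would force that kernel to be a scalar, render the final $\stackrel{q}\smallfrown f_m$ meaningless, and already for $m=2$ makes $\mathfrak{B}_1^\sigma$ empty so that $(3.2)$ would read $0$. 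The constant in the displayed definition is a transcription slip (the constraint should be $(m-1)q+\sum_{p=1}^{m-1}\sigma(p)$, i.e.\ the condition that one further full $q$-contraction produces a scalar); your derivation produces the intended index set, but asserting that your relation ``is the relation cutting out $\mathfrak{B}_{m-1}^\sigma$'' without checking it hides this mismatch, which a blind proof should flag rather than silently repair.
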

The proof of the above lemma is exactly the same as those of Lemma 4.1 and Lemma 4.2 in \cite{SB2}, because the proofs of Lemma 4.1 and Lemma 4.2 in \cite{SB2} are processes of manipulating the numbers $r_i, r_i-\sigma(i)$, and the sets $\mathfrak{A}_m^\sigma$, $\mathfrak{B}_m^\sigma$, and $\mathfrak{S}_i^m$,  and independent of the function $f$.
\begin{Lemma}
Let $q> 1$ be an integer, and   $\{f_{n}^{(i)}:n, i\in \mathbb{N}\}$ be a bi-indexed sequence of symmetric functions in $L^2(\mathbb{R}_+^q)$ such that $\lim_{n\rightarrow \infty}\langle f_{n}^{(i)}, f_{n}^{(j)}\rangle=\delta_{ij}\lambda_i$, for all $i, j\in \mathbb{N}$. Then $(2.1)$ hold for all $i, j\in \mathbb{N}$, where free stochastic integrals are defined with respect to the centered free Poisson random measure,
if and only if the following conditions are satisfied.
\begin{enumerate}
\item The case that $q$ is even.
 We have $\lim_{n\rightarrow \infty}(\|f_{n}^{(i)}\stackrel{q/2}\smallfrown f_{n}^{(i)}-f_{n}^{(i)}\|+\|f_{n}^{(i)}\stackrel{r}\smallfrown f_{n}^{(i)}\|)=0$, for all $1\le r<q$ and $r\ne q/2$, and $\lim_{n\rightarrow \infty}\|f_{n}^{(i)}\star_r^{r-1} f_{n}^{(i)}\|=0$, for all $1\le r\le q$,  for $i=1, 2, \cdots$; and $\lim_{n\rightarrow \infty} \|f_{n}^{(i)}\stackrel{r}\smallfrown f_{n}^{(j)}\|= 0$, for all $1\le r<q$, and $\lim_{n\rightarrow \infty}\|f_{n}^{(i)}\star_r^{r-1} f_{n}^{(j)}\|= 0$, for all $1\le r\le q$,  if $i\ne j$.
 \item The case that $q$ is odd. We have $\lim_{n\rightarrow \infty}(\|f_{n}^{(i)}\star_{(q+1)/2}^{(q-1)/2}f_{n}^{(i)}-f_{n}^{(i)}\|+\|f_{n}^{(i)}\stackrel{r}\smallfrown f_{n}^{(i)}\|)=0$, for all $1\le r \le q-1$, and $\lim_{n\rightarrow \infty}\|f_{n}^{(i)}\star_r^{r-1} f_{n}^{(i)}\|= 0$, for all $1\le r\le q$ and $r\ne (q+1)/2$, $i=1, 2, \cdots$; $\lim_{n\rightarrow \infty}\|f_{n}^{(i)}\stackrel{r}\smallfrown f_{n}^{(j)}\|=0$, for all $1\le r \le q-1$, and $\lim_{n\rightarrow \infty}\|f_{n}^{(i)}\star_r^{r-1} f_{n}^{(j)}\|= 0$, for all $1\le r\le q$,  if $i\ne j$.
\end{enumerate}
\end{Lemma}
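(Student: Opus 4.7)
The plan is to mimic the structure of Lemma 2.3, replacing the Wigner product formula (1.3) by the Poisson product formula (1.6), which contributes additional star-contraction terms. The case $i=j$ is exactly Lemma 4.3 of \cite{SB2}, so the novelty lies in the $i\ne j$ analysis. Fix $q$ even (the odd case is identical with the role of $\stackrel{q/2}\smallfrown$ replaced by $\star_{(q+1)/2}^{(q-1)/2}$). By (1.6),
\begin{align*}
I(f_n^{(i)})I(f_n^{(j)})-I(f_n^{(j)}) &= \langle f_n^{(i)},f_n^{(j)}\rangle + I(f_n^{(i)}\otimes f_n^{(j)}) + I\bigl(f_n^{(i)}\stackrel{q/2}\smallfrown f_n^{(j)}-f_n^{(j)}\bigr) \\
&\quad + \sum_{\substack{1\le r\le q-1\\ r\ne q/2}} I\bigl(f_n^{(i)}\stackrel{r}\smallfrown f_n^{(j)}\bigr) + \sum_{r=1}^{q} I\bigl(f_n^{(i)}\star_r^{r-1} f_n^{(j)}\bigr).
\end{align*}
Because the operands of the outer $I(\cdot)$ live in distinct $L^2(\mathbb{R}_+^k)$ (the arc terms have $k=2q-2r$ and the star terms have $k=2q-2r+1$), the isometry (1.5) makes $\varphi$ of the product of this expression with its adjoint split as a sum of squared norms plus $\lambda_j$ (from the $I(f_n^{(j)})$ subtraction) and $|\langle f_n^{(i)},f_n^{(j)}\rangle|^2\to\delta_{ij}\lambda_i^2$; subtracting $\varphi(I(f_n^{(j)})^2)\to\lambda_j$ as in Lemma 2.3 reduces the convergence (2.1) to the vanishing of
\begin{equation*}
\bigl\|f_n^{(i)}\stackrel{q/2}\smallfrown f_n^{(j)}\bigr\|^2 + 2\,\Re\langle f_n^{(i)}\stackrel{q/2}\smallfrown f_n^{(j)}, f_n^{(j)}\rangle + \sum_{\substack{1\le r\le q-1\\ r\ne q/2}}\bigl\|f_n^{(i)}\stackrel{r}\smallfrown f_n^{(j)}\bigr\|^2 + \sum_{r=1}^{q}\bigl\|f_n^{(i)}\star_r^{r-1} f_n^{(j)}\bigr\|^2.
\end{equation*}

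For the sufficiency direction ($\Leftarrow$), each squared-norm summand tends to $0$ by hypothesis, and the cross term is handled by symmetry of $f_n^{(j)}$ and $f_n^{(j)}\stackrel{q/2}\smallfrown f_n^{(j)}$: as in Lemma 2.3, $\langle f_n^{(i)}\stackrel{q/2}\smallfrown f_n^{(j)},f_n^{(j)}\rangle=\langle f_n^{(i)},f_n^{(j)}\stackrel{q/2}\smallfrown f_n^{(j)}\rangle$, and since the $i=j$ direction already yields $\|f_n^{(j)}\stackrel{q/2}\smallfrown f_n^{(j)}-f_n^{(j)}\|\to 0$, while $\langle f_n^{(i)},f_n^{(j)}\rangle\to 0$, the cross term vanishes. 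For the necessity direction ($\Rightarrow$), the $i=j$ case of (2.1) together with Lemma 4.3 of \cite{SB2} yields the single-index conditions, including $\|f_n^{(j)}\stackrel{q/2}\smallfrown f_n^{(j)}-f_n^{(j)}\|\to 0$; one then feeds this back into the cross term to extract $\sum_{r}\|f_n^{(i)}\stackrel{r}\smallfrown f_n^{(j)}\|^2 + \sum_r\|f_n^{(i)}\star_r^{r-1}f_n^{(j)}\|^2 \to 0$, forcing every individual norm to vanish.

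The main obstacle is the appearance of the indefinite-sign cross term $2\,\Re\langle f_n^{(i)}\stackrel{q/2}\smallfrown f_n^{(j)},f_n^{(j)}\rangle$, which a priori is not a sum of squares: a brute-force Cauchy–Schwarz bound is not sharp enough to isolate each norm. The remedy, as in Lemma 2.3, is to bootstrap through the $i=j$ conclusion (available from the diagonal subsystem of (2.1)) to conclude that $f_n^{(j)}\stackrel{q/2}\smallfrown f_n^{(j)}$ is asymptotically equivalent to $f_n^{(j)}$, after which symmetry of the kernels collapses the cross term. In the odd-$q$ case, the same strategy works once one notices that the identity-producing contraction is now the star contraction $\star_{(q+1)/2}^{(q-1)/2}$; one must invoke the identities (1.0) to transfer adjoints inside star contractions when rewriting $\langle f_n^{(i)}\star_{(q+1)/2}^{(q-1)/2} f_n^{(j)},f_n^{(j)}\rangle$ as an inner product against $f_n^{(j)}\star_{(q+1)/2}^{(q-1)/2} f_n^{(j)}$, after which Lemma 4.3 of \cite{SB2} supplies the needed diagonal approximation.
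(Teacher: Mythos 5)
Your proposal is correct and follows essentially the same route as the paper: reduce to the diagonal case via Lemma 4.3 of \cite{SB2}, expand $I(f_n^{(i)})I(f_n^{(j)})-I(f_n^{(j)})$ with the Poisson product formula (1.6), apply the orthogonality/isometry (1.5) to turn (2.1) into the vanishing of a sum of squared contraction norms plus one cross term, and kill that cross term by the symmetry identity $\langle f\smallfrown g,g\rangle=\langle f,g\smallfrown g\rangle$ together with the diagonal approximation $\|f_n^{(j)}\smallfrown f_n^{(j)}-f_n^{(j)}\|\to 0$ (with the arc contraction $\stackrel{q/2}\smallfrown$ replaced by $\star_{(q+1)/2}^{(q-1)/2}$ when $q$ is odd). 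The only blemish is a sign slip in the expansion of $\|a-b\|^2$ (the cross term should carry a minus sign), which is immaterial since you show it vanishes before concluding that the remaining nonnegative terms must each tend to zero.
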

\begin{proof} When $i=j$, this lemma is Lemma 4.3 in \cite{SB2}.

Now we prove this lemma when $i\ne j$.
The proof is very similar to that of Lemma 2.4. But, in the present case, we use the production formula (1.6). When $q$ is even, the proof is almost as same as that of Lemma 2.4.

When $q$ is odd, we have
\begin{align*}
 I(f_{n}^{(i)})I(f_{n}^{(j)})-I(f_{n}^{(j)})&=\langle f_n^{(i)}, f_n^{(j)}\rangle +I(f_{n}^{(i)}\stackrel{0}\smallfrown f_{n}^{(j)})+I(f_{n}^{(i)}\star_{(q+1)/2}^{(q-1)/2} f_{n}^{(j)}-f_{n}^{(j)})\\
 &+\sum_{1\le r\le q-1}I(f_{n}^{(i)}\stackrel{r}\smallfrown f_{n}^{(j)})
 + \sum_{1\le k\le q, k\ne (q+1)/2}I(f_{n}^{(i)}\star_k^{k-1} f_{n}^{(j)}).
 \end{align*}

 Using the fact that multiple free Poisson integrals of different orders are orthogonal in $L^2(\A,\varphi)$ (see (1.5)), we get
 \begin{align*}
 &\lim_{n\rightarrow \infty}\varphi((I(f_{n}^{(i)})I(f_{n}^{(j)})-I(f_{n}^{(j)}))(I(f_{n}^{(j)})I(f_{n}^{(i)})-I(f_{n}^{(j)}))\\
 =& \lambda_i\lambda_j+\lambda_j+\lim_{n\rightarrow\infty}(\|f_{n}^{(i)}\star_{(q+1)/2}^{(q-1)/2} f_{n}^{(j)}\|^2-2\Re \langle f_{n}^{(i)}\star_{(q+1)/2}^{(q-1)/2} f_{n}^{(j)}, f_{n}^{(j)}\rangle)\\
 +&\lim_{n\rightarrow \infty}(\sum_{1\le r<q}\|f_{n}^{(i)}\stackrel{r}\smallfrown f_{n}^{(j)}\|^2+\sum_{1\le k\le q, k\ne (q+1)/2}\|f_{n}^{(i)}\star_k^{k-1} f_{n}^{(j)}\|^2).
 \end{align*}
 On the other hand,
 $$\varphi((I(f_{n,i})I(f_{n,j})-I(f_{n,j}))(I(f_{n,j})I(f_{n,i})-I(f_{n,j}))=\varphi(I(f_{n,i})I(f_{n,j})^2I(f_{n,i})-2I(f_{n,i})I(f_{n,j})^2)+\lambda_j.$$
 We thus get a statement similar to that involving $(2.2)$: $$\lim_{n\rightarrow \infty}(\varphi(I(f_{n}^{(i)})I(f_{n}^{(j)})^2I(f_{n}^{(i)})-2I(f_{n}^{(i)})I(f_{n}^{(j)})^2))=\lambda_i\lambda_j$$ if and only if $$\lim_{n\rightarrow\infty}(\|f_{n}^{(i)}\star_{(q+1)/2}^{(q-1)/2} f_{n}^{(j)}\|^2-2\Re \langle f_{n}^{(i)}\star_{(q+1)/2}^{(q-1)/2} f_{n}^{(j)}, f_{n}^{(j)}\rangle $$
 $$+\sum_{1\le r<q}\|f_{n}^{(i)}\stackrel{r}\smallfrown f_{n}^{(j)}\|^2+\sum_{1\le k\le q, k\ne (q+1)/2}\|f_{n}^{(i)}\star_k^{k-1} f_{n}^{(j)}\|^2)=0. \eqno (3.3)$$

 As in the proof of Lemma 2.4, we can prove (with the same methods as those in the corresponding proofs in Lemma 2.4) that $\langle f\star_{(q+1)/2}^{(q-1)/2}g, g\rangle=\langle f, g\star_{(q+1)/2}^{(q-1)/2}g\rangle$, for $f, g\in L^2(\mathbb{R}_+^q)$,  and therefore, $$\lim_{n\rightarrow\infty}|\langle f_n^{(i)}\star_{(q+1)/2}^{(q-1)/2}g_n^{(j)}, g_n^{(j)}\rangle|=\lim_{n\rightarrow\infty}|\langle f_n^{(i)},g_n^{(j)}\star_{(q+1)/2}^{(q-1)/2}g_n^{(j)} \rangle|=0,$$ if $(2.1)$ holds in the present situation. It implies that if $(2.1)$ holds, then $$\lim_{n\rightarrow \infty}\|f_{n}^{(i)}\stackrel{r}\smallfrown f_{n}^{(j)}\|=0, \forall 1\le r \le q-1; \lim_{n\rightarrow \infty}\|f_{n}^{(i)}\star_t^{t-1} f_{n}^{(j)}\|= 0, \forall 1\le t\le q. \eqno (3.4)$$

 Conversely, $(3.4)$ implies $(3.3)$. Therefore, $(2.1)$ holds in the case that  $q$ is odd and $i\ne j$.
\end{proof}
Now we are ready to  give  the main result of this section.
\begin{Theorem}
Let $q\ge 2$ be an even integer, $\{\alpha_i:\alpha_i\in \mathbb{R}, \alpha_i\ne 0, i=1, 2, \cdots\}$ and $\{\lambda_i:\lambda_i>0, \lambda_i\in \mathbb{R}\}$ be two sequences of numbers,  and $\{a_{i}:i=1,2,\cdots\}$ be a free sequence of centered free Poisson random variables $a_i$ with parameters $\lambda_i$ and $\alpha_i$ in a non-commutative probability space $(\A, \varphi)$. Let $\{I(f_n^{(i)}):i, n\in \mathbb{N}\}$ be a bi-indexed sequence of multiple integrals of order $q$ with respect to the centred free Poisson random measure $\widehat{N}$, where $f_n^{(i)}$ is a symmetric function in $L^2(\mathbb{R}_+^q)$ such that there are numbers $M_i$ and $L_i$, and a $q$-dimensional square  $S_{n,i}$ such that $$\sup\{|f_{n}^{(i)}(t_1, \cdots, t_q)|:(t_1, \cdots, t_q)\in \mathbb{R}_+^q, n\in \mathbb{N}\}\le M_i<\infty, $$ and
  $$\{x\in \mathbb{R}_+^{q}:f_{n}^{(i)}(x)\ne 0\} \subset S_{n,i}, \sup\{\mu (S_{n,i}):n=1, 2, \cdots\}\le L_i, $$ for $i\in \mathbb{N}$. Then the following two statements are equivalent.
\begin{enumerate}
\item $\{I(f_n^{(i)}): i=1, 2, \cdots\}$ converges in  distribution to $\{a_i:i=1, 2, \cdots\}$, as $n\rightarrow \infty$;
\item The equations $(2.6)$, $(2.7)$, and $(2.8)$ hold,  for all $i, j\in \mathbb{N}$, where the free integrals in $(2.7)$ and $(2.8)$ are defined with respect to the centered free Poisson random measure.
\end{enumerate}
\end{Theorem}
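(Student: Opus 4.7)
The plan is to mirror the argument used for Theorem 2.7, with the product formula for Wigner integrals replaced by the richer product formula (1.6), equivalently (3.1)--(3.2), for multiple integrals against $\widehat{N}$. After the scaling $a_i \mapsto a_i/\alpha_i$, $f_n^{(i)}\mapsto f_n^{(i)}/\alpha_i$ we may and do assume $\alpha_i=1$ for all $i$.

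The implication $(1)\Rightarrow (2)$ is routine. The statement (2.6) follows from the isometry (1.5) and the identity $\varphi(a_ia_j)=\delta_{ij}\lambda_i$ (which is (2.7)--(2.8) evaluated for $m=2$ via Theorem 2.2). Similarly, applying Theorem 2.2 to $\varphi(a_i^3)$, $\varphi(a_i^4)$, $\varphi(a_i^2a_j^2)$ and $\varphi(a_ia_j^2)$ with $i\ne j$ gives (2.7) and (2.8).

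For $(2)\Rightarrow (1)$, the first step is to convert the four-moment data into contraction data. By Lemma~3.2 (applied componentwise in $i,j$), conditions (2.6)--(2.8) are equivalent to the family of contraction asymptotics: for each $i$, $\|f_n^{(i)}\stackrel{q/2}\smallfrown f_n^{(i)}-f_n^{(i)}\|\to 0$, $\|f_n^{(i)}\stackrel{r}\smallfrown f_n^{(i)}\|\to 0$ for $1\le r\le q-1$ with $r\ne q/2$, and $\|f_n^{(i)}\star_r^{r-1} f_n^{(i)}\|\to 0$ for $1\le r\le q$; and for $i\ne j$, $\|f_n^{(i)}\stackrel{r}\smallfrown f_n^{(j)}\|\to 0$ for $1\le r\le q-1$ and $\|f_n^{(i)}\star_r^{r-1} f_n^{(j)}\|\to 0$ for $1\le r\le q$. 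Then, for any $m\ge 2$ and any $\chi:\{1,\dots,m\}\to\mathbb{N}$, we must show
\begin{equation*}
\lim_{n\to\infty}\varphi\bigl(I(f_n^{(\chi(1))})\cdots I(f_n^{(\chi(m))})\bigr)=\varphi(a_{\chi(1)}\cdots a_{\chi(m)}),
\end{equation*}
which by Theorem~2.2 equals $\sum_{\pi\in NC(m),\,\pi\le\chi}\prod_{V\in\pi}\lambda_{\chi(V)}$.

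The central step is a Poisson analogue of Lemmas~2.4 and~2.6 based on the trace formula (3.2): the iterated expression $((\bigstar_{r_p}^{r_p-\sigma(p)}f_n^{(\chi(p))})_{p=1}^{m-2}\stackrel{q}\smallfrown f_n^{(\chi(m))})$ is indexed by $\sigma\in\mathfrak S_{2i+\pi(qm)}^{m-1}$ and $\overline{r}\in\mathfrak B_{m-1}^\sigma$. I would define the ``surviving'' set to be the $\overline{r}$ for which every contraction index $r_p$ belongs to $\{0,q/2,q\}$ together with $\sigma(p)=0$, i.e.\ exactly the purely arc-contraction terms in the product formula; all other terms involve at least one star-contraction or at least one non-trivial arc-contraction of the form $\stackrel{r}\smallfrown$ with $r\notin\{0,q/2,q\}$. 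Inductively following the scheme in the proof of Lemma~2.4, one shows that every iterated expression is, up to a scalar coming from successive full pairings, a tensor product whose last factor is either $f_n^{(j)}$ or an iterated $q/2$-contraction $f_n^{(j_1)}\stackrel{q/2}\smallfrown\cdots\stackrel{q/2}\smallfrown f_n^{(j_s)}$. Then, using the Cauchy--Schwarz inequality (2.5) together with the contraction asymptotics, each non-surviving term tends to $0$ and each surviving term tends to $\prod_{V\in\pi(\overline{r})}\lambda_{\chi(V)}$, provided $\pi(\overline{r})\le\chi$. The bijection constructed in Lemma~2.4 between surviving $\overline{r}$'s and non-crossing partitions $\pi\in NC_{\ge 2}(m)$ with $\pi\le\chi$ then yields exactly the target sum.

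The main obstacle is controlling the star-contractions $\star_r^{r-1}$, which are absent in the Wigner case but inserted throughout (3.1). Here the extra hypotheses $\|f_n^{(i)}\|_\infty\le M_i$ and bounded support $\mu(S_{n,i})\le L_i$ enter decisively: they promote $L^2$-smallness of a star-contraction at any intermediate step to $L^2$-smallness of the entire iterated object (via the analogue of Cauchy--Schwarz for $\star_r^{r-1}$, which requires an $L^\infty$ or finite-support bound on the external variable because the star-contraction retains one diagonal coordinate), and they also let us bound the $L^2$-norms of all intermediate factors uniformly in $n$. Once the vanishing of star-contractions is propagated through the iterated expressions, the proof of $(2)\Rightarrow (1)$ concludes exactly as in Theorem~2.7, now with the combinatorial bookkeeping of $\mathfrak B_{m-1}^\sigma$ and $\mathfrak D_{m-1}^\sigma$ in place of $B_m$ and $D_m$.
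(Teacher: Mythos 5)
Your proposal is correct and follows essentially the same route as the paper's proof: reduce to $\alpha_i=1$, use Lemma 3.2 to convert (2.6)--(2.8) into contraction asymptotics, expand the joint moments via (3.2), let the pure arc-contraction terms reproduce $\varphi(a_{\chi(1)}\cdots a_{\chi(m)})$ through the Lemma 2.4/2.6 bijection with non-crossing partitions, and annihilate every term containing a star contraction by a Cauchy--Schwarz-type bound for $\star_r^{r-1}$ that exploits the uniform sup-norm and bounded-support hypotheses, which also propagate through the iterated contractions. There are no substantive differences from the paper's argument.
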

\begin{proof}
As the proof of Theorem 2.7, replacing $a_i$ by $\frac{a_i}{\alpha_i}$, and $f_n^{(i)}$ by $\frac{f_n^{(i)}}{\alpha_i}$, we can assume that $\alpha_1=\alpha_2=\cdots =1$.

The implication of $(1)\Rightarrow (2)$ is same as the proof of $(1)\Rightarrow (2)$ of Theorem 2.7.

Conversely, for $\chi(1), \cdots, \chi(m), 1<m\in \mathbb{N}$, by $(3.2)$, we have
\begin{align*}
&\varphi(I(f_{n}^{(\chi(1))})I(f_{n}^{(\chi(2))})\cdots I(f_{n}^{(\chi(m))}))\\
=&\sum_{(r_1, \cdots, r_{m-2})\in B_{m-1}}(\cdots((f_{n}^{(\chi(1))}\stackrel{r_1}\smallfrown f_{n}^{(\chi(2))})\stackrel{r_2}\smallfrown f_{n}^{(\chi(3))})\cdots\stackrel{r_{m-1}}\smallfrown f_{n}^{(\chi(m-1))})\stackrel{q}\smallfrown f_{n}^{(\chi(m))}\\
+&\sum_{i=1}^{\lfloor(m-2)/2\rfloor}\sum_{\sigma\in \mathfrak{S}_{2i}^{m-1}}\sum_{(r_1, \cdots, r_{m-2})\in \mathfrak{B}_{m-1}^\sigma}((\bigstar_{r_p}^{r_p-\sigma(p)}f_{n}^{(\chi(p))})_{p=1}^{m-2}\stackrel{q}\smallfrown f_{n}^{(\chi(m))}).
\end{align*}
By the proofs of Lemmas 2.4 and 2.6, the first sum on the right-hand side of the above equation converges to $\varphi(a_{\chi(1)}a_{\chi(2)}\cdots a_{\chi(m)})$, as $n\rightarrow\infty$. Note also that the second sum in the above equation disappears  when $m=2$.  Therefore, it remains to prove $$\lim_{n\rightarrow\infty}\sum_{i=1}^{\lfloor(m-2)/2\rfloor}\sum_{\sigma\in \mathfrak{S}_{2i+\pi(qm)}^{m-1}}\sum_{(r_1, \cdots, r_{m-2})\in \mathfrak{B}_{m-1}^\sigma}(\bigstar_{r_p}^{r_p-\sigma(p)}f_{n}^{(\chi(p))})_{p=1}^{m-2}\stackrel{q}\smallfrown f_{n}^{(\chi(m))}=0, m>2. \eqno (3.5)$$

We need a Cauchy-Schwartz type inequality for $f\star_{r}^{r-1}g$, where $1<r\le q$, $f, g\in L^2(\mathbb{R}_+^q)$ satisfy the bounded conditions presented in this theorem. Assume that $$\{|f(x)|:x\in \mathbb{R}_+^{q}\}\le M(f), \{|g(x)|:x\in \mathbb{R}_+^{q}\}\le M(g),$$ $$supp\{f\}\subseteq S(f), supp\{g\}\subseteq S(g),$$ where $supp\{f\}$ is the support of $f$, $S(f)$ and $S(g)$ are  $q$-dimensional squares, and $\mu(S(f))$ is the volume of $S(f)$. We have
\begin{align*}
&\|f\star_{r}^{r-1}g\|_{L^2}^2\\
=&\int_{\mathbb{R}_+^{q}}|\int_{\mathbb{R}_+^{r-1}}f(t_1, \cdots, t_{q-r}, t_{q-r+1}, s_{r-1}, \cdots, s_1)\\
\times &g(s_1, \cdots, s_{r-1}, t_{q-r+1}, t_{q-r+2}, \cdots, t_{2q-2r+1})ds_1 \cdots ds_{r-1}|^{2}dt_1\cdots dt_{2q-2r+1}\\
\le &\int_{\mathbb{R}_+}(\int_{\mathbb{R}_+^{q-1}}|f(t_1, \cdots, t_{q-r}, t_{(q-r+1}, s_{r-1}, \cdots, s_1)|^2ds_1\cdots ds_{r-1}dt_1\cdots dt_{q-r}\\
\times &\int_{\mathbb{R}_+^{q-1}}|g(s_1, \cdots, s_{r-1}, t_{q-r+1}, t_{q-r+2}, \cdots, t_{2q-2r+1})|^2ds_1 \cdots ds_{r-1}dt_{q-r+2}\cdots dt_{2q-2r+1})dt_{q-r+1}\\
\le & \int_{\mathbb{R}_+}(M(f)^2\int_{S(f)}ds_1\cdots ds_{r-1}dt_1\cdots dt_{q-r}\\
\times &\int_{\mathbb{R}_+^{q-1}}|g(s_1, \cdots, s_{r-1}, t_{q-r+1}, t_{q-r+2}, \cdots, t_{2q-2r+1})|^2ds_1 \cdots ds_{r-1}dt_{q-r+1}\cdots dt_{2q-2r+1})\\
\le & M(f)^2 (\mu(S(f)))^{(q-1)/q}\|g\|_{L^2}^2
\end{align*}
We, therefore, get the following inequality
$$\|f\star_{r}^{r-1} g\|_{L^2}\le \sqrt{M(f)M(g)(\mu(S(f))\mu(S(g)))^{(q-1)/(2q)}\|f\|_{L^2}\|g\|_{L^2}}. $$
  Let $$\beta(f,g)=\max\{\sqrt{M(f)M(g)(\mu(S(f))\mu(S(g)))^{(q-1)/(2q)}\|f\|\|g\|}, \|f\|\|g\|\}.$$ Then we have $$\|f\star_r^{r-\sigma}g\|\le \beta(f,g),$$ where $\sigma=0$ or $1$, and $f,g\in L^2(\mathbb{R}_+^q)$ satisfying the above bounded conditions. For two sequences $\{f_n:n=1, 2, \cdots\}$ and $\{g_n:n=1, 2, \cdots\}$ with uniform (square) supports
  $$supp\{f_n\}\subset S(f), supp\{g_n\}\subset S(g), n=1, 2, \cdots,$$ we have $$\lim_{n\rightarrow \infty}\beta(f_n, g_n)=0,$$ whenever $\lim_{n\rightarrow \infty}\|f_n\|=0$ and $\sup\{\|g_n\|:n=1, 2, \cdots\}<\infty$.
   Moreover, the above calculation also shows that
  $$|f\star_r^{r-\sigma}g(t_1, \dots, t_{2q-2r+\sigma})|\le M(f)M(g)\max\{S(f)^{1/q}, S(g)^{1/q}\}^{r-\sigma}, $$
  for all $(t_1,\cdots, t_{2q-2r+\sigma})\in \mathbb{R}_+^{2q-2r+\sigma}$, and
  $$\{(t_1,\cdots,  t_{2q-2r+\sigma})\in \mathbb{R}_+^{2q-2r+\sigma}: f\star_r^{r-\sigma}g(t_1,\cdots,  t_{2q-2r+\sigma})\ne 0\}\subseteq S(f)\cup S(g).$$

Let $G_{0,n}=f_n^{(\chi(1))}$, $$G_{i,n} =G_{i-1,n}\star_{r_{i}}^{r_{i}-\sigma(i)}f_n^{(\chi(i+1))}, i=1, 2, \cdots, m-2, G_{m-1,n}=G_{m-2, n}\stackrel{q}\smallfrown f_n^{(\chi(m))}.$$ It implies that
$$\lim_{n\rightarrow\infty}\|G_{m-1,n}\|\le \lim_{n\rightarrow \infty}\beta(G_{m-2, n}, f_n^{(\chi(m))})=0, \eqno (3.6)$$
whenever $\lim_{n\rightarrow \infty}\|G_{i,n}\|=0$, for some $1\le i\le m-2$. Also $(3.6)$ implies $(3.5)$.

Therefore, it is sufficient to prove $\lim_{n\rightarrow \infty}\|G_{i,n}\|=0$, for some $1\le i\le m-2$.

There exist $k$ less that $m$ and $l$ less that $m-1$, such that $\chi(1)=\cdots=\chi(k)\ne \chi(k+1)$, and $\sigma(1)=\cdots=\sigma(l-1)=0, \sigma(l)=1$.

 Case I. $l\le k$.   If there is an item $r_{l_1}$ in the sequence $r_1, r_2, \cdots r_{l-1}$ such that $r_1, \cdots, r_{l_1-1}\in \{0, q/2, q\} $ and  $r_{l_1}$ is not in $\{0, q/2, q\}$, then by the proof of Lemma 2.4, $G_{l_1-1, n}=cH_1\otimes \cdots \otimes H_v$, where $c$ is a constant,  $H_v=f_n^{(s_1)}\stackrel{q/2}\smallfrown \cdots \stackrel{q/2}\smallfrown f_n^{(s_t)}$, $H_v=f_n^{(s_1)}$ if $t=1$, other $H_j$'s have the same form as $H_v$.  By Lemma 3.2 and the proof of Lemma 2.4, we get
 \begin{align*}
  &\lim_{n\rightarrow \infty}\|G_{l_1, n}\|=\lim_{n\rightarrow \infty}\|f_n^{(\chi(1))}\stackrel{r_1}\smallfrown f_n^{(\chi(2))}\stackrel{r_2}\smallfrown f_n^{(\chi(3))}\cdots\stackrel{r_{l_1}}\smallfrown f_n^{(\chi(l_1+1))}\|\\
  \le & \lim_{n \rightarrow \infty}|c|\|H_1\|\cdots\|H_{v-1}\|\|f_n^{(s_t)}\stackrel{r_{l_1}}\smallfrown f_n^{(\chi(l_1+1))}\|=0,
\end{align*}
 since $\lim_{n\rightarrow \infty}\|f_n^{(s_{t})}\stackrel{r_{l_1}}\smallfrown f_n^{(\chi(l_1+1))}\|=0$, by Lemma 3.2. Otherwise, if $r_1,\cdots,r_{l-1}\in \{0, q/2, q\}$, the above calculation shows that
$$\lim_{n\rightarrow \infty}\|G_{l, n}\|\le \lim_{n\rightarrow\infty}|c'|\|H_1'\|\cdots\|H_{v'-1}'\|f_n^{(s_{t'}')}\star_{r_l}^{r_l-1}f_n^{(\chi(l+1))}\|=0,$$ since $\lim_{n\rightarrow\infty}\|f_n^{(s'_{t'})}\star_{r_l}^{r_l-1}f_n^{(\chi(l+1))}\|=0$ by Lemma 3.2, no matter whether $s_{t'}'=\chi(l+1)$ or not, where we assume $G_{l-1,n}=c'H_1'\otimes \cdots \otimes H_{v'}'$, and $H_{v'}'=f_n^{(s_1')}\stackrel{q/2}\smallfrown \cdots \stackrel{q/2}\smallfrown f_n^{(s_{t'}')}$.

Case II.  $k<l$.
We show that $$\lim_{n\rightarrow\infty}\|f_n^{(\chi(1))}\stackrel{r_1}\smallfrown f_n^{(\chi(2))}\cdots \stackrel{r_{k}}\smallfrown f_n^{(\chi(k+1))}\|=0. \eqno (3.7)$$  By the proof of Lemma 2.4,
$G_{k,n}=f_n^{(\chi(1))}\stackrel{r_1}\smallfrown f_n^{(\chi(2))}\cdots \stackrel{r_{k}}\smallfrown f_n^{(\chi(k+1))}=H_1\otimes \cdots \otimes H_{p}$, where each factor $H_i$ has a similar form to $G_{k,n}$, but all $r_j$'s are nonzero. Let $f_n^{(\chi(k+1))}$ be a factor of $H_p$.  Let $H_p=f_n^{(j_1)}\stackrel{s_1}\smallfrown f_n^{(j_2)}\cdots \stackrel{s_t}\smallfrown f_n^{(\chi(k+1))}$. If there is a number $s_u$ in $\{s_1, \cdots, s_{t-1}\}$, which is neither $q/2$ nor $q$, but, $s_1, \cdots, s_{u-1}\in \{q/2, q\} $, then by the proof of Case I of this theorem,  we have $$\lim_{n\rightarrow \infty}\|f_n^{(j_1)}\stackrel{s_1}\smallfrown f_n^{(j_2)}\cdots \stackrel{s_{u}}\smallfrown f_n^{(j_{u+1})}\|=0.$$
 If $s_1, \cdots, s_{t}\in\{q/2, q\}$, we get $\lim_{n\rightarrow\infty}\|H_p\|\le C_1\lim_{n\rightarrow\infty}\|f_n^{(j_{t})}\stackrel{s_t}\smallfrown f_n^{(\chi(k+1))}\|=0$, by Lemma 3.2, since $\chi(k+1)\ne j_t$, where $C_1$ is a constant. Equation $(3.7)$ follows now. 
\end{proof}
\section{Another case of infinite dimensional free Poisson distribution limits}
When  $\lambda_1=\lambda_2=\cdots=\lambda>0$, and $p_N^{(1)}=p_N^{(2)}=\cdots =p_N$ with $\varphi_N(p_N)=\lambda/N$, for $N\in \mathbb{N}, N>\lambda$, in Definition 2.1, we shall get a special kind of infinite dimensional free Poisson distributions as follows.
\begin{Definition}
A sequence $\{b_i:i=1, 2, \cdots\}$ of random variables in a non-commutative probability space $(\A, \varphi)$ has an infinite dimensional free Poisson distribution with parameters $\lambda$ and $\alpha:=(\alpha_1, \alpha_2, \cdots, )$, a sequence of non-zero real numbers, if $$\kappa_n(b_{\chi(1)}, \cdots, b_{\chi(n)})=\lambda\alpha_{\chi(1)}\cdots\alpha_{\chi(n)},$$ for $ n\in \mathbb{N}$ and $\chi:\{1,2,\cdots, n\}\rightarrow \mathbb{N}$. Let $a_i=b_i-\varphi(b_i)$, for $i=1, 2, \cdots$. Then
$$\kappa_n(a_{\chi(1)}, \cdots, a_{\chi(n)})=\kappa_n(b_{\chi(1)}, \cdots, b_{\chi(n)}),\forall n\ge 2,  \kappa(a_i)=0, i=1, 2, \cdots.$$ We say that $\{a_n:n=1, 2, \cdots\}$ has a centered infinite dimensional free Poisson distribution denoted by $Z(\lambda, \alpha)$.
\end{Definition}
In this section, we shall give four-moment theorems for convergence to a centered infinite dimensional free Poisson distribution $Z(\lambda, \alpha)$ of a bi-index $\{I(f_{n}^{(i)}): i, n=1, 2, \cdots\}$ of free stochastic integrals of $f_n^{(i)}\in L^2(\mathbb{R}_+^q)$ with respect to free Brownian motion or centered free Poisson random measure ($q>1$).

After coefficient adjusting, we can assume $\alpha_1=\alpha_2=\cdots=1$. In this case, $Z(\lambda, \alpha)$ is denoted by $Z(\lambda)$. If a sequence $\{a_i: i=1, 2, \cdots\}$ of random variables has a distribution $Z(\lambda)$, then its distribution is  same as the distribution of a single free Poisson random variable. A necessary condition for a bi-indexed sequence $\{I(f_n^{(i)}):n, i=1, 2, \cdots\}$ to converge to $\{a_i: i=1, 2, \cdots\}$ is the following asymptotically linear dependence $(4.1)$, where $f_n^{(i)}, i, n\in \mathbb{N}$, are symmetric functions in $L^2(\mathbb{R}_+^q)$, and the free stochastic integrals $I(f_n^{(i)})$ are defined with respect to the free Brownian motion or a centered free random measure.    In certain sense, $(4.1)$ indicates that limit behavior of the bi-indexed sequence  is like the limit behavior of a single-indexed  sequence. Following the strategy for the proofs of main results in previous two sections, we find that the present situation is, in certain sense,  like a special case $i=j$ in Theorems 2.7 and 3.3.    Therefore,  we can adopt  most of the ideas in the proofs in Sections 2 and 3, and in the four-moment theorem results for  single-indexed sequences of free stochastic integrals ( Theorem 1.4 in \cite{NP} and Theorem 1.5 in \cite{SB2}).

\begin{Theorem} Let $\lambda>0$ and $\alpha:=\{\alpha_1, \alpha_2, \cdots\}$ be a sequence of non-zero real numbers, and $\{f_n^{(i)}: n, i\in \mathbb{N}\}$ be a bi-index sequence of symmetric functions in $L^2(\mathbb{R}_+^q)$, where $q\in \mathbb{N}$ is even. Let $I(f)$ be the free stochastic integral of $f\in L^2(\mathbb{R}_+^q)$ with respect to the free Brownian motion.    Then the following two statements are equivalent.
\begin{enumerate}
\item The sequence $\{\{I(f_n^{(i)}):i=1, 2, \cdots\};n=1, 2, \cdots\}$ converges in distribution to $\{a_i:i=1, 2, \cdots\}$ with distribution $Z(\lambda, \alpha)$.
\item The following equations hold $$\lim_{n\rightarrow\infty}\langle f_n^{(i)}, f_n^{(j)}\rangle=\alpha_i\alpha_j\lambda,\eqno (4.1)$$   $$\lim_{n\rightarrow\infty}\varphi(I(f_n^{(i)})^2I(f_n^{(j)})^2)=(2\lambda^2+\lambda)\alpha_i^2\alpha_j^2,
    \lim_{n\rightarrow\infty}\varphi(I(f_n^{(i)})I(f_n^{(j)})^2)=\lambda\alpha_i\alpha_j^2,\eqno (4.2)$$
      for all $i, j\in \mathbb{N}$.
\end{enumerate}
\end{Theorem}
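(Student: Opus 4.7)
The plan is to exploit the author's own observation that the distribution $Z(\lambda)$ is realised when all the limiting $a_i$ coincide with a single centred free Poisson $a$ of parameter $\lambda$ (indeed $\kappa_n(a_{\chi(1)},\ldots,a_{\chi(n)})=\lambda$ for every $\chi$ and $n\ge 2$), so the bi-indexed sequence $\{I(f_n^{(i)})\}$ is expected to asymptotically collapse to a single sequence. After rescaling by $\alpha_i$, I reduce to $\alpha_i\equiv 1$. The implication $(1)\Rightarrow (2)$ is then immediate from the isometry $(1.2)$ and the moment--cumulant formula: $\varphi(a_ia_j)=\lambda$, $\varphi(a_i^2a_j^2)=\varphi(a^4)=2\lambda^2+\lambda$, and $\varphi(a_ia_j^2)=\varphi(a^3)=\lambda$.

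For the reverse direction, the crucial quantitative input comes from $(4.1)$: for any $i,j$,
\[
\|f_n^{(i)}-f_n^{(j)}\|_{L^2}^2=\|f_n^{(i)}\|^2+\|f_n^{(j)}\|^2-2\langle f_n^{(i)},f_n^{(j)}\rangle\;\longrightarrow\;\lambda+\lambda-2\lambda=0,
\]
so by the isometry $\|I(f_n^{(i)})-I(f_n^{(j)})\|_{L^2(\A,\varphi)}\to 0$. The diagonal case $i=j$ of $(4.2)$ supplies exactly the hypotheses of Theorem 1.4 in \cite{NP} (the single-variable fourth-moment theorem over the free Wigner chaos), so for each fixed $i$ the sequence $\{I(f_n^{(i)})\}_n$ converges in distribution to the centred free Poisson $a$ of parameter $\lambda$, i.e.\ to the common marginal of every $a_i$.

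The remaining and technically most delicate step is to upgrade these two facts to convergence of an arbitrary joint moment $\varphi(I(f_n^{(\chi(1))})\cdots I(f_n^{(\chi(m))}))\to\varphi(a_{\chi(1)}\cdots a_{\chi(m)})=\varphi(a^m)$. The natural route is the telescoping identity
\[
\prod_{k=1}^{m} I(f_n^{(\chi(k))})-I(f_n^{(1)})^m=\sum_{k=1}^{m}\Bigl(\prod_{j<k}I(f_n^{(1)})\Bigr)\bigl(I(f_n^{(\chi(k))})-I(f_n^{(1)})\bigr)\Bigl(\prod_{j>k}I(f_n^{(\chi(j))})\Bigr),
\]
combined with the tracial bound $|\varphi(ABC)|\le \|A\|_{\mathrm{op}}\|B\|_{L^2}\|C\|_{\mathrm{op}}$. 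The middle factor vanishes in $L^2$ by the collapse step, while the outer factors require a uniform operator-norm bound; securing this is the main obstacle and is supplied by Biane's ultracontractive estimate $\|I^S(f)\|_{\mathrm{op}}\le (q+1)\|f\|_{L^2}$ on the Wigner chaos of fixed order $q$, together with $\|f_n^{(i)}\|_{L^2}\to\sqrt{\lambda}$. Once the telescoping error is absorbed, $\varphi(\prod_k I(f_n^{(\chi(k))}))$ and $\varphi(I(f_n^{(1)})^m)$ have the same limit, and the latter tends to $\varphi(a^m)$ by the single-variable convergence. As a backup, one could mimic the combinatorial route of Lemmas 2.3--2.6 verbatim: condition $(4.2)$ at $i=j$ already yields all the contraction-vanishing conclusions needed there, while $(4.1)$ for $i\ne j$, inserted into the computation of Lemma 2.3, forces the mixed contractions $f_n^{(i)}\stackrel{r}\smallfrown f_n^{(j)}$ to behave exactly as the diagonal ones $f_n^{(i)}\stackrel{r}\smallfrown f_n^{(i)}$; the collapse argument above is however considerably shorter.
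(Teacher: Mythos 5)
Your proposal is correct, but it takes a genuinely different route from the paper. The paper's proof of $(2)\Rightarrow(1)$ stays entirely inside the contraction combinatorics already set up in Section 2: from $(4.1)$--$(4.2)$ it extracts, via the proof of Lemma 2.3 (Lemma 5.1 in \cite{NP}), the vanishing of the off-critical contractions and the relation $\lim_n\|f_n^{(i)}\stackrel{q/2}\smallfrown f_n^{(j)}-f_n^{(j)}\|=0$ for \emph{all} pairs $i,j$, then proves the analogue of $(2.4)$ with limit $\lambda$, identifies the limit of the sum over $D_m$ with $\varphi(a_{\chi(1)}\cdots a_{\chi(m)})$ by the argument of Lemma 5.2 in \cite{NP}, and kills the $E_m$ terms by Lemma 2.6 --- this is essentially the ``backup'' route you sketch in your last sentence. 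Your primary argument instead reduces the multi-index problem to a single-index one: $(4.1)$ forces $\|f_n^{(i)}-f_n^{(j)}\|_{L^2}\to 0$, hence $\|I(f_n^{(i)})-I(f_n^{(j)})\|_{L^2(\A,\varphi)}\to 0$ by the isometry; the diagonal case of $(4.2)$ feeds Theorem 1.4 of \cite{NP} to give convergence of each marginal to the centered free Poisson law of parameter $\lambda$; and the telescoping identity together with $|\varphi(ABC)|\le\|A\|_{\mathrm{op}}\|B\|_{2}\|C\|_{\mathrm{op}}$ transfers this to all joint moments, using that $\varphi(a_{\chi(1)}\cdots a_{\chi(m)})=\varphi(a^m)$ since all cumulants of order $\ge 2$ equal $\lambda$. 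This is shorter and avoids redoing the partition analysis, but it buys this at the price of an extra analytic ingredient the paper never uses: the Haagerup-type operator-norm bound $\|I^S(f)\|_{\mathrm{op}}\le (q+1)\|f\|_{L^2}$ on a Wigner chaos of fixed order (available in the literature, e.g.\ via the Fock-space representation in \cite{BS} and as used in \cite{KNPS}; ``ultracontractive'' is not quite the standard name, so state and cite it carefully). Note also that your shortcut is specific to the Wigner setting: on the free Poisson chaos (Theorem 4.4) no such clean norm bound is available without the boundedness and support assumptions imposed there, which is presumably why the paper runs the combinatorial argument uniformly in both cases. With the norm bound properly cited, your argument is complete: the telescoping middle factor is controlled in $L^2$ by the collapse step, the flanking products are uniformly bounded in operator norm because $\|f_n^{(i)}\|\to\sqrt{\lambda}$, and the $(1)\Rightarrow(2)$ direction is, as you say, immediate from the isometry and the moment--cumulant formula.
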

\begin{proof}
As the proofs of Theorem 2.7 and 3.3,  we only need to prove $(2)\Rightarrow (1)$ when $\alpha_1=\alpha_2=\cdots=1$.  Let $1<m\in \mathbb{N}$ and $\chi:\{1, 2, \cdots, m\}\rightarrow \mathbb{N}$. By the proof Lemma 2.3 (or See Lemma 5.1 in \cite{NP}), Condition $(2)$ implies that $$\lim_{n\rightarrow\infty}(\|f_n^{(i)}\stackrel{q/2}\smallfrown f_n^{(j)}-f_n^{(j)}\|+\|f_n^{(i)}\stackrel{r}\smallfrown f_n^{(j)}\|)=0,$$ for $r=1, 2, \cdots, q-1$ and $r\ne q/2$. Thus, by the proof of $(2.4)$, we have $$\lim_{n\rightarrow\infty}\langle f_n^{(i_1)}\stackrel{q/2}\smallfrown f_n^{(i_2)}\stackrel{q/2}\smallfrown\cdots \stackrel{q/2}\smallfrown f_n^{i_k}, f_n^{i_{k+1}}\rangle=\lambda, \eqno (4.3) $$ since $\lim_{n\rightarrow \infty}\langle f_n^{(i)}, f_n^{(j)}\rangle=\lambda$ and $\lim_{n\rightarrow\infty}\|f_n^{(i)}\stackrel{q/2}\smallfrown f_n^{(j)}-f_n^{(j)}\|=0$. We thus get the following equation ($(5.2)$ in \cite{NP}) from $(4.3)$
$$\lim_{n\rightarrow \infty}(\cdots ((f_n^{(\chi(1))}\stackrel{r_1}\smallfrown f_n^{(\chi(2))})\stackrel{r_2}\smallfrown f_n^{(\chi(3))})\cdots\stackrel{r_{m-2}}\smallfrown f_n^{(\chi(m-1))})\stackrel{r_{m-1}}\smallfrown f_n^{(\chi(m))}=\lambda^j,$$ where $(r_1, \cdots, r_{m-1})\in D_m$, and $j$ is the number of $q$'s in the sequence $r_1, \cdots, r_{m-1}$. It implies from the proof of Lemma 5.2 in \cite{NP} that
\begin{align*}&\lim_{n\rightarrow \infty}\sum_{(r_1, \cdots, r_{m-1})\in D_m}(\cdots ((f_n^{(\chi(1))}\stackrel{r_1}\smallfrown f_n^{(\chi(2))})\stackrel{r_2}\smallfrown f_n^{(\chi(3))})\cdots\stackrel{r_{m-2}}\smallfrown f_n^{(\chi(m-1))})\stackrel{r_{m-1}}\smallfrown f_n^{(\chi(m))}\\
=&\varphi(a_{\chi(1)}\cdots a_{\chi(m)}).
\end{align*}
By the proof of Lemma 2.6, we have
$$\lim_{n\rightarrow \infty}(\cdots ((f_n^{(\chi(1))}\stackrel{r_1}\smallfrown f_n^{(\chi(2))})\stackrel{r_2}\smallfrown f_n^{(\chi(3))})\cdots\stackrel{r_{m-2}}\smallfrown f_n^{(\chi(m-1))})\stackrel{r_{m-1}}\smallfrown f_n^{(\chi(m))}=0,$$ for every $(r_1, \cdots, r_{m-1})\in E_m:=B_m\setminus D_m$.
It follows that
 \begin{align*}
&\lim_{n\rightarrow\infty}\varphi(I(f_n^{(\chi(1))})\cdots I(f_n^{(\chi(m))}))\\
=&\lim_{n\rightarrow\infty}\sum_{(r_1, r_2, \cdots, r_{m-1})\in B_m}(\cdots (f_n^{(\chi(1))}\stackrel{r_1}\smallfrown f_n^{(\chi(i2))})\stackrel{r_2}\smallfrown f_n^{(\chi(3))}\cdots\stackrel{r_{m-2}}\smallfrown f_n^{(\chi(m-1))})\stackrel{r_{m-1}}\smallfrown f_n^{(\chi(m))}\\
=&\lim_{n\rightarrow\infty}\sum_{(r_1, r_2, \cdots, r_{m-1})\in D_m}(\cdots (f_n^{(\chi(1))}\stackrel{r_1}\smallfrown f_n^{(\chi(2))})\stackrel{r_2}\smallfrown f_n^{(\chi(3))}\cdots\stackrel{r_{m-2}}\smallfrown f_n^{(\chi(m-1))})\stackrel{r_{m-1}}\smallfrown f_n^{(\chi(m))}\\
+&\sum_{(r_1, r_2, \cdots, r_{m-1})\in E_m}\lim_{n\rightarrow\infty}(\cdots (f_n^{(\chi(1))}\stackrel{r_1}\smallfrown f_n^{(\chi(2))})\stackrel{r_2}\smallfrown f_n^{(\chi(3))}\cdots\stackrel{r_{m-2}}\smallfrown f_n^{(\chi(m-1))})\stackrel{r_{m-1}}\smallfrown f_n^{(\chi(m))}\\
=&\lim_{n\rightarrow\infty}\sum_{(r_1, r_2, \cdots, r_{m-1})\in D_m}(\cdots (f_n^{(\chi(1))}\stackrel{r_1}\smallfrown f_n^{(\chi(2))})\stackrel{r_2}\smallfrown f_n^{(\chi(3))}\cdots\stackrel{r_{m-2}}\smallfrown f_n^{(\chi(m-1))})\stackrel{r_{m-1}}\smallfrown f_n^{(\chi(m))}\\
=&\varphi(a_{\chi(1)}\cdots a_{\chi(m)}).
\end{align*}
\end{proof}

\begin{Lemma} Let $q>1$ be an odd number and $m\ge 2$ be an integer. Let $\{f_{n,i}:i, n\in \mathbb{N}\}$ be a bi-indexed sequence of  symmetric functions in $L^2(\mathbb{R}_+^q)$ such that $\lim_{n\rightarrow\infty}\langle f_{n,i}, f_{n,j}\rangle=\lambda>0$, for all $i, j\in \mathbb{N}$. There exist numbers $M_i$ and $L_i$, and a $q$-dimensional square  $S_{n,i}$ such that $$\sup\{|f_{n,i}(x)|:x\in \mathbb{R}_+^q, n\in \mathbb{N}\}\le M_i,$$    $$ \{x\in \mathbb{R}_+^{q}:f_{n,i}(x)\ne 0\} \subset S_{n,i}, \sup\{\mu (S_{n,i}):n=1, 2, \cdots\}\le L_i, $$ for $i\in \mathbb{N}$. If $\lim_{n\rightarrow\infty}\|f_{n,i}\star_{(q+1)/2}^{(q-1)/2}f_{n,j}-f_{n,j}\|=0$, for all $i, j\in \mathbb{N}$, then $$\sum_{i=0}^{\lfloor (m-2)/2\rfloor}\sum_{\sigma\in \mathfrak{S}_{2i+\pi(mq)}^{m-1}}\sum_{(r_1, \cdots, r_{m-2})\in \mathfrak{D}_{m-1}^\sigma}(\bigstar_{r_p}^{r_p-\sigma(p)}f_{n,p})_{p=1}^{m-2}\stackrel{q}\smallfrown f_{n,m}\rightarrow \varphi(a_{1}\cdots a_{m}),$$ as $n\rightarrow \infty$, where   $\pi(n)=0$, if $n$ is even; $\pi(n)=1$ if $n$ is odd, $\{a_i: i=1, 2, \cdots\}$ is a sequence of random variables with distribution $Z(\lambda)$.
\end{Lemma}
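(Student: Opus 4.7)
The plan is to mimic the strategy used in Lemma 2.4 and Theorem 3.3, adapted to the odd-$q$ star-contraction setting, and exploiting the key simplification available in Section 4 that all pairwise inner products tend to the \emph{same} limit $\lambda$ (rather than to $\delta_{ij}\lambda_i$). As a consequence, partitions will no longer need to be $\le \chi$; every non-crossing partition of $[m]$ into blocks of size $\ge 2$ will contribute $\lambda^{|\pi|}$, which exactly matches the formula for $\varphi(a_1\cdots a_m)$ when the $a_i$ share the common distribution $Z(\lambda)$.

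First I would establish the odd-$q$ analogue of equation $(2.4)$: for any indices $i_1,\dots,i_{k+1}$,
\[
\lim_{n\to\infty}\bigl\langle f_{n,i_1}\star_{(q+1)/2}^{(q-1)/2}\cdots\star_{(q+1)/2}^{(q-1)/2} f_{n,i_k},\ f_{n,i_{k+1}}\bigr\rangle=\lambda.
\]
This follows inductively by inserting $\pm f_{n,i_2}$ inside the innermost contraction, using the hypothesis $\|f_{n,i_1}\star_{(q+1)/2}^{(q-1)/2}f_{n,i_2}-f_{n,i_2}\|\to 0$ and a Cauchy--Schwartz-type bound for $\star$-contractions (proved exactly as in Theorem 3.3, via the uniform $L^\infty$ and support bounds $M_i,L_i$). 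The self-adjointness identity $\langle f\star_{(q+1)/2}^{(q-1)/2} g,\,g\rangle=\langle f,\,g\star_{(q+1)/2}^{(q-1)/2} g\rangle$ from the proof of Lemma~3.2 is needed here to move contractions across the inner product.

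Next I would analyse the structure of the iterated object $G_{m-2,n}:=(\bigstar_{r_p}^{r_p-\sigma(p)}f_{n,p})_{p=1}^{m-2}$ for $(r_1,\dots,r_{m-2})\in\mathfrak{D}_{m-1}^\sigma$. By the definition of $\mathfrak{D}$, each $r_p$ lies in $\{0,(q+1)/2,q\}$, with $r_p\in\{0,q\}$ iff $\sigma(p)=0$ and $r_p=(q+1)/2$ iff $\sigma(p)=1$. An induction on $p$ parallel to the one in Lemma~2.4 shows that $G_{m-2,n}$ is (up to a scalar produced by the full contractions $r_p=q$) a tensor product $H_1\otimes\cdots\otimes H_l$ whose factors are either single $f_{n,j}$'s or iterated star contractions $f_{n,j_1}\star_{(q+1)/2}^{(q-1)/2}\cdots\star_{(q+1)/2}^{(q-1)/2}f_{n,j_s}$. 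The final operation $\stackrel{q}{\smallfrown}f_{n,m}$ closes the outermost block, producing a scalar whose asymptotic value is determined by the first step.

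The bookkeeping step is then to set up the bijection between the admissible index set
\[
\mathfrak{F}:=\bigcup_{i,\sigma}\mathfrak{D}_{m-1}^\sigma
\]
and the collection $NC_{\ge 2}(m)$: each choice of $(\sigma,r_1,\dots,r_{m-2})$ in $\mathfrak{D}$ encodes a non-crossing partition $\pi$ of $[m]$ into blocks of size $\ge 2$, with each positive $r_p=(q+1)/2$ linking adjacent members of a block and each $r_p=q$ closing a block; the parity condition $2i+\pi(qm)$ and the relation $2\sum r_k=(m-1)q+\sum\sigma(p)$ precisely enforce that the resulting $\pi$ is non-crossing with blocks of even total count compatible with the parity of $qm$. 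This is the same combinatorial correspondence worked out explicitly in Lemma~2.4 and Remark~2.5, only with $(q+1)/2$ playing the role of $q/2$ and the $\sigma$-variable keeping track of the $\star$-versus-arc distinction. By the first step, for every $\pi\in NC_{\ge 2}(m)$ the corresponding summand tends to $\lambda^{|\pi|}$, so summing gives $\sum_{\pi\in NC_{\ge 2}(m)}\lambda^{|\pi|}=\varphi(a_1\cdots a_m)$ by the moment--cumulant formula applied to $Z(\lambda)$.

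The main obstacle will be verifying that the parity condition $\sigma\in\mathfrak{S}_{2i+\pi(qm)}^{m-1}$ together with the definition of $\mathfrak{D}_{m-1}^\sigma$ in the odd-$q$ case sets up exactly the correct bijection with $NC_{\ge 2}(m)$: unlike in the even case, only the star contractions carry the $\sigma$-shift $(q+1)/2$ (contributing an odd number of surviving variables), so the constraint $2\sum r_k=(m-1)q+\sum\sigma(p)$ must be tracked carefully. Once this combinatorial identification is in hand, the convergence of each individual summand is immediate from the first step together with the bound $\|f\star_r^{r-\sigma}g\|\le\beta(f,g)$ established in Theorem~3.3, which keeps all ``spectator'' factors uniformly bounded.
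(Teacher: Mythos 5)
Your proposal is correct and follows essentially the same route as the paper: show each admissible summand converges to $\lambda$ raised to the number of blocks (via the telescoping argument from the proof of $(2.4)$ adapted to $\star_{(q+1)/2}^{(q-1)/2}$, using the hypothesis $\|f_{n,i}\star_{(q+1)/2}^{(q-1)/2}f_{n,j}-f_{n,j}\|\to 0$ and the Cauchy--Schwartz-type bounds guaranteed by $M_i$, $L_i$), identify the index set $\bigcup_{i,\sigma}\mathfrak{D}_{m-1}^\sigma$ with $NC_{\ge 2}(m)$, and match $\sum_{\pi\in NC_{\ge 2}(m)}\lambda^{|\pi|}$ with $\varphi(a_1\cdots a_m)$ via the moment--cumulant formula. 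The only difference is that the paper outsources the combinatorial bijection to Lemma 4.4 of \cite{SB2} (and Lemma 5.2 of \cite{NP}) rather than reconstructing it as you propose, which is an equivalent amount of work.
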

\begin{proof}
The proof is similar to that of Lemma 2.4. We first prove that $$G_{m-1}:=(\bigstar_{r_p}^{r_p-\sigma(p)}f_{n,p})_{p=1}^{m-2}\stackrel{q}\smallfrown f_{n,m}\rightarrow\lambda^j,\eqno (4.4)$$ as $n\rightarrow \infty$, where $(r_1, \cdots, r_{m-2})\in \mathfrak{D}_{m-1}^{\sigma}$, and $j$ is the number of $q$'s appearing $(r_1, \cdots, r_{m-2})$. Since $(r_1, \cdots, r_{m-2})\in \mathfrak{D}_{m-1}^\sigma$ and $G_{m-1}$ is a number, $G_{m-1}$ must be a product of $j$ scalar products having either $\langle f_{n, i}, f_{n,j}\rangle$ or $(\cdots ((f_{n, j_1}\star_{(q+1)/2}^{(q-1)/2}f_{n, j_2})\star_{(q+1)/2}^{(q-1)/2})\cdots\star_{(q+1)/2}^{(q-1)/2}f_{n, j_{k}})\stackrel{q}\smallfrown f_{n, j_{k+1}}$. Now we show that
$$\lim_{n\rightarrow \infty}(\cdots ((f_{n, j_1}\star_{(q+1)/2}^{(q-1)/2}f_{n, j_2})\star_{(q+1)/2}^{(q-1)/2})\cdots\star_{(q+1)/2}^{(q-1)/2}f_{n, j_{k}})\stackrel{q}\smallfrown f_{n, j_{k+1}}=\lambda.\eqno (4.5)$$
By the hypotheses of this lemma and the proof of (2.4), we have
\begin{align*}
&\lim_{n\rightarrow \infty}|(\cdots((f_{n, j_1}\star_{(q+1)/2}^{(q-1)/2}f_{n, j_2})\star_{(q+1)/2}^{(q-1)/2} f_{n, j_3})\cdots\star_{(q+1)/2}^{(q-1)/2}f_{n, j_{k}})\stackrel{q}\smallfrown f_{n, j_{k+1}}-\lambda|\\
\le & \lim_{n\rightarrow\infty}|f_{n,j_{k}}\stackrel{q}\smallfrown f_{n, j_{k+1}}-\lambda|=0,
\end{align*}
We have proved (4.5), which implies $(4.4)$. Then following the proof of Lemma 4.4 in \cite{SB2} (and Lemma 5.2 in \cite{NP}), we  get
$$\sum_{i=0}^{\lfloor (m-2)/2\rfloor}\sum_{\sigma\in \mathfrak{S}_{2i+\pi(mq)}^{m-1}}\sum_{(r_1, \cdots, r_{m-2})\in \mathfrak{D}_{m-1}^\sigma}(\bigstar_{r_p}^{r_p-\sigma(p)}f_{n,p})_{p=1}^{m-2}\stackrel{q}\smallfrown f_{n,m}\rightarrow \sum_{j=1}^m\lambda^j R_{m,j},$$ as $n\rightarrow \infty$, where  $R_{m,j}$ is the number of non-crossing partitions in $NC_{\ge 2}(m)$ with exactly $j$ blocks.
On the other hand, $$\varphi(a_{i_1}\cdots a_{i_m})=\sum_{\pi\in NC_{\ge 2}(m)}\kappa_\pi(a_{i_1},\cdots, a_{i_m})=\sum_{\pi\in NC_{\ge 2}(m)}\lambda^{|\pi|}= \sum_{j=1}^m\lambda^j R_{m,j}.$$
\end{proof}
\begin{Theorem}
Let $q\ge 2$ be an integer, $\lambda>0$ and $\alpha:=\{\alpha_1, \alpha_2, \cdots\}$ be a sequence of non-zero real numbers,  and $\{a_{i}:i=1,2,\cdots\}$ be a  sequence of random variables in a non-commutative probability space $(\A, \varphi)$ with distribution $Z(\lambda, \alpha)$. Let $\{I(f_n^{(i)}):i, n\in \mathbb{N}\}$ be a bi-indexed sequence of multiple integrals of order $q$ with respect to the centred free Poisson random measure $\widehat{N}$, where $f_n^{(i)}$ is a symmetric function in $L^2(\mathbb{R}_+^q)$ such that there are numbers $M_i$ and $L_i$, and a $q$-dimensional square  $S_{n,i}$ such that $$\sup\{|f_{n}^{(i)}(x)|:x\in \mathbb{R}_+^q, n\in \mathbb{N}\}\le M_i, $$    $$\{x\in \mathbb{R}_+^{q}:f_{n}^{(i)}(x)\ne 0\} \subset S_{n,i}, \sup\{\mu (S_{n,i}):n=1, 2, \cdots\}\le L_i, $$ for $i\in \mathbb{N}$. Then the following two statements are equivalent.
\begin{enumerate}
\item $\{I(f_n^{(i)}): i=1, 2, \cdots\}$ converges in distribution to $\{a_i:i=1, 2, \cdots\}$ , as $n\rightarrow \infty$;
\item The equations $(4.1)$ and $(4.2)$ hold,  for all $i, j\in \mathbb{N}$, where the free integrals in  $(4.2)$ are defined with respect to the centered free Poisson random measure.
\end{enumerate}
\end{Theorem}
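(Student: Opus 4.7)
The plan is to adapt the strategy used for Theorems 3.3 and 4.2 to the present setting of the free Poisson algebra with infinite-dimensional limit $Z(\lambda,\alpha)$. After rescaling $a_i\mapsto a_i/\alpha_i$ and $f_n^{(i)}\mapsto f_n^{(i)}/\alpha_i$, I may assume $\alpha_i = 1$ for all $i$, so the limit has distribution $Z(\lambda)$ and every free cumulant of order at least $2$ of the $a_i$'s equals $\lambda$. The forward direction $(1)\Rightarrow (2)$ is then immediate from the moment--cumulant formula over $NC_{\ge 2}$: each of $\varphi(a_ia_j)$, $\varphi(a_ia_j^2)$, and $\varphi(a_i^2a_j^2)$ is a sum of $\lambda^{|\pi|}$ over the relevant non-crossing partitions with no singletons, yielding exactly (4.1) and (4.2).

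For the converse, the first step is to extract asymptotic contraction estimates from (4.1) and (4.2). Expanding $I(f_n^{(i)})I(f_n^{(j)}) - I(f_n^{(j)})$ via the Poisson product formula (1.6), the resulting multiple integrals lie in chaoses of pairwise distinct orders, so the isometry (1.5) gives
\[
\|I(f_n^{(i)})I(f_n^{(j)}) - I(f_n^{(j)})\|_2^2 = |\langle f_n^{(i)}, f_n^{(j)}\rangle|^2 + \|f_n^{(i)}\|^2\|f_n^{(j)}\|^2 + T_n(i,j),
\]
where $T_n(i,j)$ collects the squared norms of every remaining $\stackrel{r}\smallfrown$ and $\star_r^{r-1}$ contraction of $f_n^{(i)}$ with $f_n^{(j)}$, with the single order-$q$ term replaced by $f_n^{(i)}\stackrel{q/2}\smallfrown f_n^{(j)} - f_n^{(j)}$ when $q$ is even (or by $f_n^{(i)}\star_{(q+1)/2}^{(q-1)/2} f_n^{(j)} - f_n^{(j)}$ when $q$ is odd). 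Traciality and (4.2) force the left-hand side to tend to $(2\lambda^2+\lambda) - 2\lambda + \lambda = 2\lambda^2$, which matches the limit of the first two terms on the right by (4.1). Hence $T_n(i,j)\to 0$, which yields, term by term, all the contraction estimates of Lemma 3.2 with $\delta_{ij}\lambda_i$ replaced by the uniform constant $\lambda$ for every pair $i,j$.

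With these estimates in hand, I would expand $\varphi(I(f_n^{(\chi(1))})\cdots I(f_n^{(\chi(m))}))$ via the trace formula (3.2) and split the sum into the ``diagonal'' indices $(r_1,\ldots,r_{m-2})\in\mathfrak{D}_{m-1}^\sigma$ and the complement $\mathfrak{E}_{m-1}^\sigma$. Extending Lemma 4.3 to both parities of $q$, the diagonal sum converges to $\sum_{\pi\in NC_{\ge 2}(m)}\lambda^{|\pi|} = \varphi(a_{\chi(1)}\cdots a_{\chi(m)})$, because each admissible iterated contraction collapses to a product of scalars of the form $\langle f_n^{(i_1)}\stackrel{q/2}\smallfrown\cdots\stackrel{q/2}\smallfrown f_n^{(i_k)}, f_n^{(i_{k+1})}\rangle$ (or its $\star_{(q+1)/2}^{(q-1)/2}$ analog), each tending to $\lambda$ by the estimates from the previous paragraph. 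For indices in $\mathfrak{E}_{m-1}^\sigma$, I would reuse Cases~I and~II of the proof of Theorem 3.3: the definition of $\mathfrak{E}_{m-1}^\sigma$ forces some intermediate factor to be a vanishing contraction, and this smallness propagates through the whole product via the Cauchy--Schwarz-type bound for $\star_r^{r-1}$ established there, together with the bounded $L^\infty$ and bounded-support hypotheses on the $f_n^{(i)}$.

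The main obstacle is controlling the star contractions under iteration: unlike the free Wigner setting of Theorem 4.2, the $\star_r^{r-1}$ operation does not admit a pure $L^2$ estimate, and propagating the vanishing of a single $\star_r^{r-1}$-factor through a long iterated product in (3.2) requires precisely the uniform $L^\infty$ and bounded-support assumptions on the $f_n^{(i)}$. Apart from that, the argument is a direct combination of the contraction calculus of the second paragraph with the combinatorics already developed in the proofs of Lemma~4.3, Theorem~3.3, and Theorem~4.2.
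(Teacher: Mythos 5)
Your proposal is correct and follows essentially the same route as the paper's proof: after rescaling to $\alpha_i=1$, the converse is obtained by extracting the contraction estimates from $(4.1)$--$(4.2)$, feeding them into the trace formula $(3.2)$, letting the ``diagonal'' indices produce $\sum_{\pi\in NC_{\ge 2}(m)}\lambda^{|\pi|}=\varphi(a_{\chi(1)}\cdots a_{\chi(m)})$ via Lemma 4.3, and annihilating the remaining indices by the Cauchy--Schwarz-type bounds of Theorem 3.3 that rely on the uniform boundedness and bounded-support hypotheses. The only differences are presentational: you obtain the contraction estimates by a self-contained orthogonality-and-positivity argument on $\|I(f_n^{(i)})I(f_n^{(j)})-I(f_n^{(j)})\|_2^2$ where the paper invokes the proof of Lemma 4.3 in \cite{SB2} (this works because only one implication is needed, so the cross-term bookkeeping of Lemmas 2.3 and 3.2 can be avoided), and you treat both parities of $q$ uniformly where the paper separates Case I (even $q$, reduced to the proof of Theorem 3.3) from Case II (odd $q$, the $\mathfrak{D}_{m-1}^\sigma$ versus $\mathfrak{E}_{m-1}^\sigma$ split with its subcases).
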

\begin{proof} As in the previous theorems, we only need to prove $(2)\Rightarrow (1)$ in the case that $\alpha_1=\alpha_2=\cdots =1$.   For $1<m\in \mathbb{N}$ and $\chi:\{1,2,\cdots\}\rightarrow \mathbb{N}$. It is sufficient to prove $$\lim_{n\rightarrow \infty}\varphi(I(f_n^{(\chi(1))})\cdots I(f_n^{(\chi(m))}))=\varphi(a_{\chi(1)}a_{\chi(2)}\cdots a_{\chi(m)}).\eqno (4.6)$$

Case I. $q$ is even. Equation $(4.2)$ implies that $$\lim_{n\rightarrow \infty}\varphi(I(f_n^{(i)})T(f_n^{(j)})^2I(f_n^{(i)})-2I(f_n^{(i)})T(f_n^{(j)})^2)=2\lambda^2-\lambda.\eqno(4.7)$$ It follows from (4.7), the proof of Lemma 4.3 in \cite{SB2} that $$\lim_{n\rightarrow \infty}(\|f_n^{(i)}\stackrel{q/2}\smallfrown f_n^{(j)}-f_n^{(j)}\|+\|f_n^{(i)}\stackrel{r}\smallfrown f_n^{(j)}\|+\|f_n^{(i)}\star_{s}^{s-1}f_n^{(j)}\|)=0,$$ for $1\le r<q$, $r\ne q/2$, and $1\le s\le q$. Then, exactly following the proof of $(2)\Rightarrow (1)$ in Theorem 3.3, we get (4.6).

Case II. $q$ is odd. By $(3.2)$, we have
\begin{align*}
&\varphi(I(f_{n}^{(\chi(1))})I(f_{n}^{(\chi(2))})\cdots I(f_{n}^{(\chi(m))}))\\
=&\sum_{i=0}^{\lfloor(m-2)/2\rfloor}\sum_{\sigma\in \mathfrak{S}_{2i+\pi(qm)}^{m-1}}\sum_{(r_1, \cdots, r_{m-2})\in \mathfrak{D}_{m-1}}^\sigma((\bigstar_{r_p}^{r_p-\sigma(p)}f_{n}^{(\chi(p))})_{p=1}^{m-2}\stackrel{q}\smallfrown f_{n}^{(\chi(m))})\\
+&\sum_{i=0}^{\lfloor(m-2)/2\rfloor}\sum_{\sigma\in \mathfrak{S}_{2i+\pi(qm)}^{m-1}}\sum_{(r_1, \cdots, r_{m-2})\in \mathfrak{E}_{m-1}^\sigma}((\bigstar_{r_p}^{r_p-\sigma(p)}f_{n}^{(\chi(p))})_{p=1}^{m-2}\stackrel{q}\smallfrown f_{n}^{(\chi(m))}).
\end{align*}
 Condition $(2)$, Lemmas 3.2 and 4.3 implies that the first sum in the right-hand side of the above equation approaches $\varphi(a_{\chi(1)}\cdots a_{\chi(m)})$, as $n\rightarrow \infty$. It remains to show that  $$(\bigstar_{r_p}^{r_p-\sigma(p)}f_{n}^{(\chi(p))})_{p=1}^{m-2}\stackrel{q}\smallfrown f_{n}^{(\chi(m))}\rightarrow 0, \eqno(4.8)$$ as $n\rightarrow \infty$, for all $(r_1, \cdots, r_{m-2})\in \mathfrak{E}_{m-1}^\sigma$.
Let $r_l$ be the first item in the sequence $r_1, r_2, \cdots, r_{m-2}$ which breaks the rules in $\mathfrak{D}_{m-2}^\sigma$, that is, for $j<l$, $r_j\in \{0, \frac{q+1}{2}, q\}$ and $r_j\in \{0, q\}$ if and only if $\sigma(j)=0$. We shall prove $(4.8)$ in the all possible cases as follows.

$(i)$. The number $r_l$ satisfies $1\le r_l<q$ and $r_l\ne (q+1)/2$. In this case, the equation $(4.8)$ follows from  the equation $\lim_{n\rightarrow\infty}\|f_n^{(i)}\star_{r}^{r-\sigma}f_n^{(j)}\|=0$, where $\sigma=0$ or $1$, $1\le r<q,  r\ne (q+1)/2$ (see Lemma 3.2 (2)), $(3.6)$, and the proof of Theorem 3.3.

$(ii)$. The number $r_l=q$ and $\sigma(l)=1$. In this case, the equation $(4.8)$ follows from  the equation $\lim_{n\rightarrow\infty}\|f_n^{(i)}\star_{q}^{q-1}f_n^{(j)}\|=0$ (see Lemma 3.2 (2)), $(3.6)$, and the proof of Theorem 3.3.

$(iii)$. The number $r_l=(q+1)/2$ and $\sigma(l)=0$. In this case, we have  $$\lim_{n\rightarrow \infty}\|f_{n, i_l}\stackrel{(q+1)/2}\smallfrown f_{n, i_{l+1}}\|=0,\eqno (4.9)$$ by Lemma 3.2 (2), since $ (q+1)/2\ne q$. The equation  $(4.8)$ follows by (4.9), (3.6) and the proof of Theorem 3.3.
\end{proof}


\begin{thebibliography}{99}
\bibitem[AG]{AG}G. An and M. Gao. {\sl Multidimensional Compound Poisson Distributions in Free Probability}. arXiv:1603.08612; to appear in Sci. China Math.
\bibitem[BnT]{BnT}O. E. Barndorff-Nielson and S. Thorjornsen. {\sl The Levy-Ito decomposition in free probability}. Probab. Theory Related Fields 131(2) (2005), 114-136.
\bibitem[BV]{BV}G. Ben Arous and D. V. Voiculescu. {\sl Free extreme values}. Ann. of Probab. 34(5) (2006), 2037-2059.
\bibitem[BS]{BS}P. Biane and R. Speicher. {\sl Stochastic calculus with respect to free Brownian motion and analysis on Wigner space}. Probab. Thoery and Related Fields 112(1998), 373-409.
\bibitem[SB1]{SB1}S. Bourguin. {\sl Vector-valued semicircular limits on the free Poisson algebra}.  Electron. Commun. Probab. 21 (2016), Paper No. 55, 11 pp.
\bibitem[SB2]{SB2} S. Bourguin. {\sl Poisson convergence on the free Poisson algebra}. Bernoulli 21(4) (2015), 2139-2156.
\bibitem[BN]{BN}S. Bourguin and I. Nourdin {\sl Freeness Characterizations on free chaos spaces}. arXiv 1702.07427v1, 24 Feb 2017.
\bibitem[BP]{BP}S. Bourguin and G. Peccati. {\sl Semicircular limits on free Poisson chaos: counterexamples to a transfor principle}. J. Funct. Anal., 267(2014), 963-997.
\bibitem[KR]{KR}R. Kadison and J. Ringrose. {\sl Fundamentals of the theory of operator algebras}. Graduate Studies in MAth. Vol. 16, AMS, 1997.
\bibitem[KNPS]{KNPS} T. Kemp, I. Nourdin, G. Peccati and R. Speicher. {\sl Wigner chaos and the fourth moment.} Ann. of Prob., Vol. 40 (2012), No. 4, 1577-1635.
\bibitem [NS]{NS}A. Nica and R. Speicher. {\sl Lectures on combinatorics for free probability}, LMS Lecture Notes 335, Cambridge University Press, 2006.
\bibitem[NP]{NP}I. Nourdin and G. Peccati. {\sl Poisson approximations on the free Wigner Chaos}.  Ann. of Prob. Vol 41(2013), No. 4, 2709-2723.
\bibitem[NP1]{NP1}I. Nourdin and G. Peccati. {\sl Normal approxiamtions with Malliavin calculus. From Stein's method to universality}. Cambridge Tracts in Mathematics 192. Cambridge Unv. Press, 2012.
\bibitem[NPS]{NPS} I. Nourdin, G. Peccati and R. Speicher. {\sl Multidimensional semicircular limits on the free Wigner chaos}. Seminar on Stochastic Analysis, Random Fields and Applications VII, Progress in Probability 67 (2013), 211-221.
\bibitem[NuP]{NuP}D. Nualart and G. Peccati. {\sl Central limit theorems for sequences of multiple stochastic integrals}. Ann. of Probab. 33(2005), 177-193.
\bibitem[PT]{PT}G. Peccati and C. A. Tudor. {\sl Gaussian limits for vector-valued multiple stochastic integrals}. Seminaire de Probabilities XXXVIII, Lecure Notes in Math., 1857, Spinger, Berlin, (2005), 247-262.
\bibitem[RS]{RS}R. Speicher. {\sl Combinatorial theory for the free product with amalgamation and operator-valued free probability theory}. Mem. Amer. Math. soc., 132(1998), no. 627.
\bibitem[VDN]{VDN}D. Voiculescu, K. Dykema, and A. Nica. {\sl Free Random variables}. CRM Monograph Series, Vol. 1, AMS, 1992.
\end{thebibliography}
\end{document}